\newtheorem{theorem}{Theorem}[section]
\newtheorem{corollary}[theorem]{Corollary}
\newtheorem{lemma}[theorem]{Lemma}
\theoremstyle{definition}
\theoremstyle{remark}
\newtheorem{remark}[theorem]{Remark}
\newtheorem{example}[theorem]{Example}
\numberwithin{equation}{section}
\newcommand{\smnoind}{\smallskip\noindent}
\newcommand{\CK}{\mathcal{K}}
\newcommand{\cv}{\mathbf{c}}
\begin{document}

\baselineskip=17pt

\title[Linear orthogonality preservers of Hilbert $C^*$-modules: General case]{Linear orthogonality
preservers of Hilbert $C^*$-modules over general $C^*$-algebras}

\author{Chi-Wai Leung, Chi-Keung Ng  \and Ngai-Ching Wong }

\address[Chi-Wai Leung]{Department of Mathematics, The Chinese
University of Hong Kong, Hong Kong.}
\email{cwleung@math.cuhk.edu.hk}

\address[Chi-Keung Ng]{Chern Institute of Mathematics and LPMC, Nankai University, Tianjin 300071, China.}
\email{ckng@nankai.edu.cn}

\address[Ngai-Ching Wong]{Department of Applied Mathematics, National Sun Yat-sen University,  Kaohsiung, 80424, Taiwan, R.O.C.}
\email{wong@math.nsysu.edu.tw}

\thanks{The authors are supported by Hong Kong RGC Research Grant (2160255),
National Natural Science Foundation of China (10771106), and
Taiwan NSC grant (NSC96-2115-M-110-004-MY3).}

\date{\today}

\keywords{}

\subjclass[2000]{46L08, 46H40}

\begin{abstract}
As a partial generalisation of the Uhlhorn theorem to Hilbert $C^*$-modules, we show in this article that the module structure and the
orthogonality structure of a Hilbert $C^*$-module determine its
Hilbert $C^*$-module structure.
In fact, we have a more general result as follows.
Let $A$ be a $C^*$-algebra, $E$ and $F$ be Hilbert $A$-modules, and $I_E$ be the ideal of $A$ generated by $\{\langle x,y\rangle_A: x,y\in E\}$.
If $\Phi : E\to F$ is an $A$-module map, not assumed to be bounded but satisfying
$$
\langle \Phi(x),\Phi(y)\rangle_A\ =\ 0\quad\text{whenever}\quad\langle x,y\rangle_A\ =\ 0,
$$
then there exists a
unique central positive multiplier $u\in M(I_E)$ such that
$$
\langle \Phi(x), \Phi(y)\rangle_A\ =\ u
\langle x, y\rangle_A\qquad (x,y\in E).
$$
As a consequence, $\Phi$ is automatically bounded, the induced map $\Phi_0: E\to \overline{\Phi(E)}$ is adjointable, and $\overline{Eu^{1/2}}$ is isomorphic to $\overline{\Phi(E)}$ as Hilbert $A$-modules.
If, in addition, $\Phi$ is bijective, then $E$ is isomorphic to $F$. 
\end{abstract}

\maketitle

\section{Introduction}

\medskip

It is well known that the norm and the inner product of a Hilbert space $H$ determine
each other, through a polarization formula. 
By the Uhlhorn theorem (which generalized the famous Wigner theorem; see e.g.\ \cite{Molnar2}), the orthogonality structure of the projective space (i.e.\ collection of $\mathbb{C}$-rays) of $H$ determines its real Hilbert space structure if $\dim H \geq 3$ (see e.g.\ \cite[2.2.2]{Molnar}). 
In the case when the linear structure of the Hilbert space is also considered, one can relax the two-way orthogonality preserving assumption in the Uhlhorn theorem and obtain the following result.  
\begin{quotation}
{\it If $\theta$ is a bijective linear map between Hilbert spaces satisfying $\langle\theta(x), \theta(x)\rangle =0$ whenever $\langle x, y \rangle = 0$, then $\theta$ is a scalar
multiple of a unitary.}
\end{quotation}

\medskip

Recall that a Banach space $E$ is called a \emph{Hilbert $A$-module} (where $A$ is a $C^*$-algebra) if $E$ is a right $A$-module $E$ equipped with a positive definite Hermitian $A$-form $\langle\cdot, \cdot\rangle_A$ on $E\times E$ such that the norm of any $x\in E$ coincides with $\|\langle x,x \rangle_A\|^{1/2}$. 
In \cite{Frank-webpage},  M. Frank stated, as one of  the four major open problems in Hilbert C*-module theory,
whether  the above statement is true for general Hilbert $C^*$-modules.
The exact form of his question is as follow. 
\begin{quotation}
Prove or disprove: Each injective bounded
$C^*$-linear orthogonality-preserving mapping $T$
on a Hilbert $C^*$-module over a given $C^*$-algebra $A$ is of the form $T= tU$ for some $C^*$-linear isometric mapping $U$ on the Hilbert $C^*$-module and for some element $t$ of the center $Z(M(A))$ of the multiplier $C^*$-algebra of $A$  which does not admit zero divisors therein.
\end{quotation}

In the case when $T$ is bijective, one may regard a positive answer to the above question as a generalization of a Uhlhorn type theorem to Hilbert $A$-modules, where only one-way orthogonality preserving property is assumed but the linear structure is also considered. 
Notice that one is almost forced to take into account of the $A$-module structure because the question will not have a positive answer if one considers orthogonality preserving map on $\mathbb{C}$-rays (see the example concerning $\bar H$ below) and it is not clear how to give a natural notion of ``$A$-rays''. 
On the other hand, as Hilbert $C^*$-modules are important objects (see e.g.\ \cite{Lance}) because they are the main ingredients in the theory of Strong Morita equivalences (see e.g.\ \cite{Rie}), KK-theory (see e.g.\ \cite{bkd}) and $C^*$-correspondences (see e.g.\ \cite{Kat}), it is thus potentially useful if one can recover the structure of a Hilbert $C^*$-module from some partial information about it.

\medskip

The aim of this article is to investigate the above question of Frank. 
Strictly speaking, this question has a negative answer (see Example \ref{eg-I}(a)). 
However, in Corollary \ref{cor:adj-inj}(a), we show that one can get a positive answer to this question if one slightly changes the expected conclusion (note that in this case, neither the injectivity nor the continuity of the given orthogonality preserving map is necessary). 
Our result can be formulated as follows: 
\begin{quotation}
{\it
Let $A$ be a $C^*$-algebra, and let $E$ and $F$ be Hilbert
$A$-modules.
Suppose that $T: E\rightarrow F$ is an $A$-module map, which is not assumed to be bounded, but is
\emph{orthogonality preserving}, in the sense that for any $x,y\in E$,
$$\langle x,y\rangle_A = 0\quad \Rightarrow\quad \langle T(x),T(y)\rangle_A = 0.$$
There exist a positive element $t$ in the center of the multiplier algebra of the closed linear span, $I_E$, of $\{\langle x, y \rangle_A: x,y\in E\}$ as well as a Hilbert $A$-module isomorphism $U: \overline{Et} \to \overline{\Phi(E)}$ such that $T(x) = U(xt)$ for any $x\in E$ (see Remark \ref{rem:rt-mult}(b) for the meaning of $xt$).} 
\end{quotation}
In the case when $T$ is bijective, we obtain in Theorem \ref{thm:I+mod+op=isom}(b), an analogue of the displayed statement in the first paragraph:
\begin{quotation}
{\it
Let $A, E$ and $F$ be as in the above.
If $T: E\rightarrow F$ is a bijective orthogonality preserving $A$-module map (not assumed to be bounded), there exists an invertible element $t\in Z(M(I_E))_+$ such that $x\mapsto T(xt^{-1})$ is a Hilbert $A$-module isomorphism from $E$ onto $F$.}
\end{quotation}
This result implies that the $A$-module structure and the orthogonality structure of $E$ determine the Hilbert $A$-module $E$ up to a Hilbert $A$-module automorphism.

\medskip

We remark that this positive answer is somewhat surprising because the
orthogonal structure of a general Hilbert $C^*$-module is not as
rich as that of a Hilbert space. For example, the conjugate Hilbert
space $\bar H$ of a complex Hilbert space $H$ can be  regarded as a Hilbert
$\mathcal{K}(H)$-module (where $\CK(H)$ is the $C^*$-algebra of all
compact operators on $H$), and for any $\bar x, \bar y\in \bar H$,
one has $\langle \bar x, \bar y\rangle_{\CK(H)} = 0$ if and only if
either $\bar x=0$ or $\bar y =0$ (recall that $\langle \bar x, \bar
y\rangle_{\CK(H)}(z) = y \langle x, z \rangle$ for any $z\in H$).
This simple example also tells us that the above result will not be true if $T$ is only a $\mathbb{C}$-linear map instead of an $A$-module map. 

\medskip

In order to obtain Corollary \ref{cor:adj-inj} and Theorem \ref{thm:I+mod+op=isom}, we need Theorem \ref{thm-unbdd}, which says that if $\Phi$ is an orthogonality preserving $A$-module map (not necessarily bijective), one can find a (unique) element $u\in Z(M(I_E))_+$ such that
$$
\langle \Phi(x), \Phi(y) \rangle_A\ =\ u \langle x, y \rangle_A 
\quad  (x,y\in E).
$$
In the case when $A$ is a standard $C^*$-algebra, this result was
established in \cite{Turnsek-JMAA}.
In the case when $A$ is
commutative and $E$ is full (i.e.\ $I_E = A$), the above result can be found in \cite{LNW-orth-pres-comm}.
Moreover, in \cite{LNW-orth-pres-rr0}, we proved this result in the case when $A$ has real rank zero and $E$ is full.
On the other hand, it was consider in \cite{FMP} the above result in the case when one adds the assumptions that $\Phi$ is bounded and $E$ is full.
It happens that the idea of the proofs in these papers are very different, and none
of  them seem to be suitable for the general case.
In fact, our proof employs techniques concerning open projections.

\medskip

On the other hand, since $E$ and $F$ can be embedded into their
respective linking algebras, some readers may consider the
possibility of extending the orthogonality preserving map $\Phi$ to a disjointness preserver between the linking algebras, and using the
corresponding results for disjointness preservers in, e.g., \cite{CKLW03, KLW04, LW-ZPP,
Wong-FunctionSpaces},  to obtain Theorem \ref{thm-unbdd}. 
However, if one
wants to do this, the first difficulty is whether there is a
canonical map from $\CK(E)$ to $\CK(F)$ that is compatible with
$\Phi$ (notice that $\Phi$ is not even assumed to be bounded).
Nevertheless, after obtaining Theorem \ref{thm-unbdd},
we can use it to show that such an extension is possible (see Theorem \ref{thm:link-alg}), but we do not
see any easy way to obtain it without our main theorems.
Note also that Theorem \ref{thm:link-alg} can be regarded as an extension of Theorem \ref{thm-unbdd}.

\medskip

Let us mention here that, unlike the situation in some other
literature (e.g.\ \cite{FMP}), $\Phi$ is not assumed to be bounded.
This is because of the philosophy as stated in the first paragraph.

\medskip
Our final remark in this section is about a related work of J. Schweizer.
Recall that for a Hilbert $A$-module $X$, the $C^*$-algebra generated by elementary
operators $\theta_{\zeta,\eta}(\xi) := \eta\langle \zeta, \xi\rangle_A$ ($\zeta,\eta,\xi\in X$) is denoted by $\mathcal{K}(X)$.
In this way,
$X$ becomes a Hilbert $\mathcal{K}(X)$-$A$-bimodule.
Schweizer showed in his PhD thesis
(see \cite[9.6]{Sch96}) that if $T$ is a \emph{bounded} orthogonality preserving $\mathbb C$-linear map
from a full Hilbert $C$-module $X$ into a full Hilbert $D$-module $Y$ (where $C$ and $D$ are $C^*$-algebras), then
there is a ``local morphism'' $\pi:\mathcal{K}(X)\to \mathcal{K}(Y)$ such that
$$
T(ax) = \pi(a)T(x) \quad (a\in \CK(X)),
$$
or equivalently,
$$
\theta_{T\zeta,T\zeta} \leq \|T\|^2\,\pi(\theta_{\zeta,\zeta}) \quad (\zeta\in X).
$$
One may speculate whether this result of Schweizer have some overlap with our main theorems.
However, it is not the case.
For instant, if $H$ is a complex Hilbert space
and $X := \bar H$ is regarded the full Hilbert $\CK(H)$-module as before, then $\CK(X) = \mathbb{C}$ and \cite[9.6]{Sch96} gives us merely
the trivial conclusion that a bounded orthogonality preserving $\mathbb C$-linear map
$T: X \to X$ is $\mathbb{C}$-linear.
Our main theorem, however, implies that any orthogonality preserving $\CK(H)$-module map $T:X\to X$ is a scalar multiple of an isometry. 
Therefore, Schweizer's result does not seem to shed any light on the proof of the main theorems in this article.

\medskip

{\it Acknowledgement.}
We appreciate M. Frank for sending us his recent preprint \cite{FMP}, in which
the case of bounded orthogonality preserving $A$-module maps is considered, through a quite different and independent approach. 

\medskip

\section{Notation and Preliminary}

\medskip

Let us first set some notations.
Throughout this article, $A$ is a $C^*$-algebra and 
$A^{**}$ is the bidual of $A$ (which is a von Neumann algebra).
We denote by $Z(A)$ and $M(A)$ respectively, the center and the space of all
multipliers of $A$.
Moreover, ${\rm Proj}_1(A)$ is the collection of all non-zero
projections in $A$.
Note that if $p\in {\rm
Proj}_1(A^{**})$ is an open projection, then the $C^*$-subalgebra $A \cap pA^{**}p$ is weak-*-dense in $pA^{**}p$ (see e.g.\ \cite{Bro88} or \cite[3.11.9]{Ped}).

\medskip

If $a\in A_+$, we consider $C^*(a)$ to be the $C^*$-subalgebra generated by
$a$, and let $\cv(a)$ be the central cover of $a$ in $A^{**}$ (see
e.g.\ \cite[2.6.2]{Ped}).
If $\alpha, \beta\in \mathbb{R}_+$, we set
$e_a(\alpha, \beta)$ and $e_a(\alpha, \beta]$ to be the spectral
projections (in $A^{**}$) of $a$ corresponding
respectively, to the sets $(\alpha,
\beta)\cap \sigma(a)$ and $(\alpha, \beta]\cap \sigma(a)$.
When $\{a_\lambda\}$ is an increasing net (respectively, a
decreasing net) in $A^{**}_{sa}$, the notation $a_\lambda
\uparrow a$ (respectively, $a_\gamma \downarrow a$) means that
$a_\lambda \to a$ in the weak-*-topology.

\medskip

On the other hand, throughout this article, $E$ and $F$ are non-zero Hilbert
$A$-modules. Unless specified otherwise, $\Phi:E\to F$ is an
orthogonality preserving (see the above for its meaning) $A$-module map,
which is not assumed to be bounded. For simplicity, we write
$\langle x,y\rangle$ instead of $\langle x,y\rangle_A$ when both $x$
and $y$ are in $E$ (or $F$).
Recall that $E$ is said to be \emph{full} if $I_E = A$ (where $I_E$ is as in the above).
For
any $C^*$-subalgebra $B\subseteq A$, we put $E\cdot B := \{xb:x\in
E;b\in B\}$.
By the Cohen Factorisation theorem, $E\cdot B$ coincides with its norm closed linear span. 

\medskip

We now recall the following elementary result (see e.g.\ \cite{LNW-orth-pres-rr0}).

\medskip

\begin{lemma}\label{lem:cp}
Suppose that $p\in {\rm Proj}_1(A^{**})$. 
If $b\in Z(pA^{**}p)_+$, then $\|\cv(b)\| = \|b\|$, $\cv(b)p = b$ and $\cv(b)\cv(p) = \cv(b)$.
\end{lemma}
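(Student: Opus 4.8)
The plan is to deduce all three assertions from a single structural fact: the ``corner map'' $\iota\colon z\mapsto zp$ restricts to a $*$-isomorphism of $Z(A^{**})\cv(p)$ onto $Z(pA^{**}p)$ (here $\cv(p)$ is the central support projection of $p$, and $Z(A^{**})\cv(p)$ is the centre of the central corner $A^{**}\cv(p)$). Granting this, one may take $\cv(b):=\iota^{-1}(b)$: it then lies in $Z(A^{**})\cv(p)$, so $\cv(b)\cv(p)=\cv(b)$, which is assertion~3; it satisfies $\cv(b)p=\iota(\cv(b))=b$, which is assertion~2; and $\|\cv(b)\|=\|b\|$, assertion~1, because a $*$-isomorphism of $C^*$-algebras is isometric. (Alternatively, assertion~1 follows from assertions~2--3 alone: $\|b\|\le\|\cv(b)\|$ trivially, and for $\varepsilon>0$ the spectral projection $e:=e_{\cv(b)}(\|\cv(b)\|-\varepsilon,\|\cv(b)\|]$ is a nonzero central projection with $e\le\cv(p)$, hence $ep\ne 0$ since otherwise $p\le\cv(p)-e$ would contradict minimality of $\cv(p)$, and then $\|b\|\ge\|\cv(b)\,ep\|\ge(\|\cv(b)\|-\varepsilon)\|ep\|=\|\cv(b)\|-\varepsilon$.) If $\cv$ has a prior intrinsic definition on positive elements of corners, one checks that it coincides with $\iota^{-1}(b)$; the uniqueness proved below makes this automatic.

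The formal part of the isomorphism is routine. For $z\in Z(A^{**})$ with $z\cv(p)=z$, the element $zp=pzp$ lies in $pA^{**}p$ and, $z$ being central, commutes with each $pap$, so $zp\in Z(pA^{**}p)$; since $(z_1p)(z_2p)=z_1z_2p$, the map $\iota$ is a $*$-homomorphism. It is injective: if $zp=0$ then, replacing $z$ by $z^*z$, we may assume $z\ge 0$, and then the range projection $r(z)$ is a central projection with $r(z)\le\cv(p)$ and $r(z)p=0$; minimality of $\cv(p)$ among central projections dominating $p$ forces $r(z)=0$, i.e.\ $z=0$. The same minimality argument shows that a positive central element restricting to a given $b$ under $\iota$ is unique.

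The substantive point, and the step I expect to be the main obstacle, is surjectivity of $\iota$ -- equivalently, the existence of $\cv(b)$ for a given $b\in Z(pA^{**}p)_+$. I would get this from an explicit construction using comparison theory in the von Neumann algebra $A^{**}$. Since $\cv(p)$ is central, $\cv(\cv(p))=\cv(p)$, and a Zorn/maximality argument -- using the standard fact that if two projections have non-orthogonal central covers then they have nonzero equivalent subprojections -- partitions $\cv(p)=\sum_i q_i$ into pairwise orthogonal projections, each $q_i$ equivalent, via a partial isometry $u_i$ with $u_i^*u_i=g_i\le p$ and $u_iu_i^*=q_i$, to a subprojection $g_i$ of $p$. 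Put $z:=\sum_i u_ibu_i^*$; the summands sit in the mutually orthogonal corners $q_iA^{**}q_i$ and have norm $\le\|b\|$, so the sum converges strongly to a positive element with $z=\cv(p)z\cv(p)$. The one computation to push through is that $z$ is central in $A^{**}$: for $a\in A^{**}$ one has $q_kz\,a\,q_l=u_kb(u_k^*au_l)u_l^*=u_k(u_k^*au_l)bu_l^*=q_k a\,z\,q_l$, where the middle equality is precisely the centrality of $b$ in $pA^{**}p$ (note $u_k^*au_l\in pA^{**}p$), and summing over $k,l$ together with $z=\cv(p)z\cv(p)$ gives $za=az$. Finally $zp=b$: from $u_i=u_ig_i$ and $g_i=g_ip$ one gets $u_ip=u_i$, so $a_i:=pu_i=pu_ip$ lies in $pA^{**}p$ and $\sum_i a_ia_i^*=p\cv(p)p=p$; hence $zp=pzp=\sum_i a_iba_i^*=b\sum_i a_ia_i^*=b$, using once more that $b$ is central in $pA^{**}p$. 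Thus $\iota(z)=b$, so $z=\cv(b)$ and the lemma is proved.
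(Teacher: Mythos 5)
Your proposal is correct. Note, however, that the paper itself offers no proof of this lemma: it is ``recalled'' as an elementary fact, with a pointer to \cite{LNW-orth-pres-rr0} and to Pedersen for the notion of central cover, so there is nothing to compare line by line. What you have done is supply a complete, self-contained proof of the underlying standard fact, namely that $z\mapsto zp$ is a $*$-isomorphism of $Z(A^{**})\cv(p)$ onto $Z(pA^{**}p)$, from which the three assertions follow at once (with $\cv(b)$ the preimage of $b$); your injectivity argument via range projections and minimality of $\cv(p)$, the comparison-theoretic decomposition $\cv(p)=\sum_i q_i$ with $q_i\sim g_i\leq p$, the centrality computation $q_kzaq_l=u_kb(u_k^*au_l)u_l^*=q_kazq_l$, and the identity $pzp=\sum_i a_iba_i^*=b$ are all sound (the summation step also uses, implicitly but legitimately, that $\cv(p)$ is central so the off-corner blocks of $za-az$ vanish, and that $e$ central makes $ep$ a projection of norm one in your alternative norm argument). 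The only difference from what the paper presumably has in mind is one of economy: the corner--center isomorphism is a textbook result (it can be quoted directly, e.g.\ via $Z(pMp)=pMp\cap M'p$ in Kadison--Ringrose or Takesaki), whereas you reprove it from scratch by an explicit construction of the inverse; your route buys self-containedness and an explicit formula $\cv(b)=\sum_i u_ibu_i^*$, at the cost of invoking comparison theory for a fact the paper treats as known. Your remark that uniqueness reconciles your construction with any prior definition of $\cv$ is also justified: any positive central element $z$ with $z\cv(p)=z$ and $zp=b$ is unique by your injectivity argument, so the lemma's properties pin the element down.
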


\medskip

In the following lemma, we collect some simple useful facts  concerning Hilbert $C^*$-modules.
Before we give this lemma, let us recall that $E^{**}$ is a Hilbert $A^{**}$-module with the module action and the inner-product extending the ones in $E$.

\medskip

\begin{lemma}
\label{lem:xv}
Let $p\in {\rm Proj}_1(A^{**})$, $\delta\in [0,1)$ and $x\in E\setminus \{0\}$.
Set $a := \frac{\langle x,x
\rangle}{\|x\|^2}$, $q_\delta:=e_a(\delta,1]$, $q_x := e_a(0,1]$ and $F_\Phi := \overline{\Phi(E)}$.

\smnoind (a) If $p$ is open and $y\in E$ satisfying
$\left<x,y\right>p=0$, then
$\left<\Phi(x),\Phi(y)\right>p = 0$.

\smnoind (b) If $v\in A^{**}$ such that $\langle x,x \rangle v
\in A$, then $xv\in E$.

\smnoind (c) If $u,v\in A^{**}$ with $au = av$, then $q_\delta u =q_\delta v$.
Thus, $ap = a$ will imply that $q_x\leq p$.

\smnoind (d) $xp = x$ if and only if $a\in pAp$, which is also equivalent to $x\in
E\cdot (A \cap pA^{**}p).$

\smnoind (e) $xq_x =x$ and $\Phi(x)q_x = \Phi(x)$.

\smnoind (f) $F_\Phi \cdot I_E = F_\Phi$ and $I_{F_\Phi} \subseteq I_E$.
\end{lemma}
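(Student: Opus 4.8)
The plan is to establish the six items largely separately, relying on four standard tools: the defining orthogonality–preservation property of $\Phi$ together with its $A$–linearity; Borel functional calculus for the single positive element $a$; the passage (recalled just before the lemma) from an open projection $p\in A^{**}$ to an approximate unit $(e_\lambda)$ in $A\cap pA^{**}p$ with $e_\lambda\uparrow p$ weak-$*$; and Cohen factorisation, which makes sets $E\cdot B$ closed. I regard the second equality in (e) as the heart of the matter: once it is in hand, (f) follows from it quickly, (d) rests on (c) and the first equality in (e), and everything else is a direct computation.

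I would first dispatch the direct items. For (a), take an approximate unit $(e_\lambda)$ of $A\cap pA^{**}p$; then $e_\lambda=pe_\lambda$, so $\langle x,ye_\lambda\rangle=\langle x,y\rangle pe_\lambda=0$, whence $\langle\Phi(x),\Phi(y)\rangle e_\lambda=\langle\Phi(x),\Phi(ye_\lambda)\rangle=0$, and letting $e_\lambda\uparrow p$ weak-$*$ yields $\langle\Phi(x),\Phi(y)\rangle p=0$. For (b), put $b_n:=\langle x,x\rangle(\langle x,x\rangle+\tfrac1n)^{-1}\in C^*(\langle x,x\rangle)\subseteq A$; then $b_nv=(\langle x,x\rangle+\tfrac1n)^{-1}(\langle x,x\rangle v)\in A$, so $xb_nv\in E$, and since $\langle x(1-b_n)v,x(1-b_n)v\rangle=(c_nv)^*(c_nv)$ with $c_n:=\langle x,x\rangle^{1/2}(1-b_n)=\langle x,x\rangle^{1/2}(n\langle x,x\rangle+1)^{-1}$ and $\|c_n\|\le\tfrac12 n^{-1/2}$, we get $xb_nv\to xv$, hence $xv\in E$. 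For (c), let $h$ be the bounded Borel function on $\sigma(a)\subseteq[0,1]$ equal to $1/t$ on $(\delta,1]$ and to $0$ on $[0,\delta]$, so $q_\delta=h(a)a$; then $a(u-v)=0$ forces $q_\delta(u-v)=h(a)\big(a(u-v)\big)=0$, and applying this with $(u,v)=(p,1)$ gives $q_\delta\le p$ for every $\delta>0$, hence $q_x=\sup_{\delta>0}q_\delta\le p$. The first equality in (e) is the expansion $\langle x-xq_x,x-xq_x\rangle=\|x\|^2(a-aq_x-q_xa+q_xaq_x)=0$, using $aq_x=q_xa=a$. For (d): $xp=x$ gives $\langle x,x\rangle=\langle xp,xp\rangle=p\langle x,x\rangle p$, i.e.\ $a\in A\cap pA^{**}p$; conversely $pa=a$ gives $q_x\le p$ by (c), so $xp=xq_xp=xq_x=x$ by the first part of (e); and the equivalence with $x\in E\cdot(A\cap pA^{**}p)$ follows from $xb_n\to x$ with $b_n=a(a+\tfrac1n)^{-1}\in\overline{aAa}\subseteq A\cap pA^{**}p$ together with Cohen factorisation (one direction) and the fact that each $c\in A\cap pA^{**}p$ has $cp=c$ (the other). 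Finally, granting the second part of (e), for $z\in E$ one gets $\langle\Phi(z),\Phi(z)\rangle=q_z\langle\Phi(z),\Phi(z)\rangle q_z\in q_zA^{**}q_z\cap A=\overline{\langle z,z\rangle A\langle z,z\rangle}\subseteq I_E$ (here $q_z:=e_b(0,1]$ with $b=\langle z,z\rangle/\|z\|^2$); polarisation gives $I_{F_\Phi}\subseteq I_E$, and then $F_\Phi\cdot I_E=F_\Phi$ because each $\Phi(z)$ is the norm limit of $\Phi(z)\,g_n(\langle\Phi(z),\Phi(z)\rangle)$ with $g_n(t)=t(t+\tfrac1n)^{-1}$ and $g_n(\langle\Phi(z),\Phi(z)\rangle)\in I_{F_\Phi}\subseteq I_E$, while $F_\Phi\cdot I_E$ is closed and $\Phi(E)$ is dense in $F_\Phi$.

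There remains the core assertion, the second equality in (e): $\Phi(x)q_x=\Phi(x)$, equivalently $\langle\Phi(x),\Phi(x)\rangle(1-q_x)=0$. The cheap half is that whenever $c\in A$ satisfies $\langle x,x\rangle c=0$ one has $\langle xc,xc\rangle=0$, so $xc=0$ and therefore $\Phi(x)c=\Phi(xc)=0$; hence $\langle\Phi(x),\Phi(x)\rangle c=0$ for all such $c$. One would like to ``insert $c=1-q_x$'' here. Now $q_x=e_a(0,1]$ is the range projection of $\langle x,x\rangle\in A_+$, so it is an open projection — as is each $q_\delta=e_a(\delta,1]$, being the range projection of $(a-\delta)_+\in A_+$ — so part (a) is available for all these projections and the weak-$*$ passage $q_\delta\uparrow q_x$ is harmless. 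The real difficulty is that $1-q_x$ itself need not be open, so it cannot be weak-$*$ approximated by elements $c\in A$ annihilating $\langle x,x\rangle$, and the cheap implication by itself is not enough: one must use that $\Phi$ is defined on all of $E$, not merely on the cyclic submodule $\overline{xA}$ — by testing the orthogonality relation against suitable elements of $E$ built from $x$ (and from other vectors) by right multiplication, and invoking part (a) with the open projections $q_\delta$. I expect this to be the main obstacle, and it is precisely where the absence of a boundedness hypothesis on $\Phi$ genuinely matters: if $\Phi$ happened to be bounded, then $\Phi(xa^{1/n})=\Phi(x)a^{1/n}\to\Phi(x)$ together with $\Phi(x)a^{1/n}\to\Phi(x)q_x$ weak-$*$ would give the equality at once.
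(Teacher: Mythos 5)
Parts (a)--(d) and the first equality in (e) are essentially correct and match the paper's arguments in substance (for (c) with $\delta=0$, just pass to the weak-$*$ limit along $q_\delta\uparrow q_x$, as you in effect do for the statement $q_x\le p$). The genuine gap is exactly the point you flag and then leave open: the second equality in (e), $\Phi(x)q_x=\Phi(x)$, is never proved, and the strategy you sketch for it --- testing the orthogonality relation against auxiliary elements of $E$ and invoking part (a) with the open projections $q_\delta$ --- is a misdiagnosis. No orthogonality preservation (and no boundedness) is needed at all; the paper's argument is one line and uses only tools you already have on the table. Since $xq_x=x$, one has $a=q_xaq_x$, so part (d) applied with $p=q_x$ (i.e.\ the Cohen factorisation you yourself use in (d)) gives $x=zb$ with $z\in E$ and $b\in A\cap q_xA^{**}q_x$. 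Then $A$-linearity yields $\Phi(x)=\Phi(z)b$, and since $bq_x=b$ we get $\Phi(x)q_x=\Phi(z)bq_x=\Phi(z)b=\Phi(x)$. In other words, the ``support transfer'' from $x$ to $\Phi(x)$ comes purely from factoring $x$ through an element of $A$ supported under $q_x$ and pushing that factor through the module map; your closing remark that boundedness would be what saves the day is therefore beside the point.

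Because your proof of (f) is made conditional on the unproven half of (e), it is incomplete as written; but (f) also needs none of this machinery. Since $E$ is a Hilbert $I_E$-module, Cohen factorisation gives, for each $z\in E$, a decomposition $z=ya$ with $y\in E$ and $a\in I_E$, whence $\Phi(z)=\Phi(y)a\in F_\Phi\cdot I_E$ and $\langle \Phi(w),\Phi(z)\rangle=\langle\Phi(w),\Phi(y)\rangle a\in I_E$ for all $w\in E$; this gives $F_\Phi\cdot I_E=F_\Phi$ and $I_{F_\Phi}\subseteq I_E$ directly, which is the paper's route. So the missing idea throughout is the same one: factor the vector, not the projection, and let module-linearity do the work.
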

\begin{proof}
In the following, we consider $\{e_n\}_{n\in \mathbb{N}}$ to be an approximate unit in $C^*(a)$.
Notice that $\|xe_n - x\| \to 0$ since $\|x - x e_n\|^2 = \|x\|^2\|a - e_n a - a e_n + e_n a e_n\|$. 

\smnoind
(a) Pick any increasing net
$\{a_\lambda\}$ in $A_+ \cap pA^{**}p$ with $a_\lambda\uparrow
p$ (note that $p$ is open).
As $a_\lambda = p a_\lambda$, one has $\left<x,
ya_\lambda\right> = 0$ (for any $\lambda$). Thus,
$\left<\Phi(x),\Phi(y)\right>a_\lambda = 0$ (for any $\lambda$), and hence 
$\left<\Phi(x),\Phi(y)\right>p = 0$.

\smnoind (b) As $e_n v\in A$ (by the hypothesis) and
$\|xv - xe_n v\|^2_{E^{**}}
 = \|x\|^2\|v^*(1- e_n) a (1- e_n) v\|$,
we see that $xv\in E$.

\smnoind (c) Let $\{b_n\}$ be a sequence in $C^*(a)_+$ such that $b_n\uparrow q_\delta$.
As $b_n(u-v) = 0$ ($n\in \mathbb{N}$), we see that
$q_\delta u = q_\delta v$.
By taking $\delta = 0$, we obtain also the second statement.

\smnoind (d) If $xp = x$, then $a = p a p$. 
If $a\in pAp$, then $e_n \in pAp$ and $x\in E\cdot (A \cap pA^{**}p)$ (as $\|x e_n - x\|\to 0$).
Finally, if $x\in E\cdot (A \cap pA^{**}p)$, then clearly $xp = x$.

\smnoind (e) As $xe_n = xe_nq_x \to xq_x$ in norm, one has $x =
xq_x$. 
Now, part (c) implies that $x = zb$ for some $z\in E$ and $b\in A \cap q_xA^{**}q_x$.
Thus,
$\Phi(x) = \Phi(z)b \in F\cdot (A \cap q_xA^{**}q_x)$, which gives $\Phi(x)q_x = \Phi(x)$.

\smnoind (f) As $E$ is a Hilbert $I_E$-module, any $z\in E$ is of the form $z = ya$ for some $y\in E$ and $a\in I_E$.
Thus, $\Phi(E) \subseteq F_\Phi\cdot I_E$.
The second statement follows from the first one (as $I_E$ is an ideal of $A$).
\end{proof}

\medskip

\section{The main results}

\medskip

We may now start proving our main theorem. 
Observe that in the proof for
the real rank zero case in \cite{LNW-orth-pres-rr0}, one starts with
an element $x\in E$ with $p_x:=\langle x, x\rangle$ being a
projection, and shows that one can find $w_x\in Z(p_xAp_x)_+$
such that
$\left<\Phi(y),\Phi(z)\right> = \left<y,z\right>w_x$ ($y,z\in
E\cdot (p_xAp_x)$).
Since there are plenty of such $x$'s when $A$ has real rank zero, we
can ``patched together'' $\cv(w_x)$, where $x$ runs through a
``maximal disjoint'' family of such elements, and then do a surgery to find the
required $u$.

\medskip

However, a general $C^*$-algebra $A$ might not even have any
projection. 
Therefore, our starting point is the following formally weaker lemma (notice that only $y$ is allowed to vary).
After obtaining this lemma, we will then
``patch together'' a different set of elements, and do a surgery to
obtain our main theorem.

\medskip

\begin{lemma}
\label{lem:uepsilon} Suppose that $x\in E\setminus \{0\}$.
If $a:= \frac{\langle x, x \rangle}{\|x\|^2}$ and $q_x := e_a(0,1]$, there is $u_x\in Z(q_xA^{**}q_x)_+$ such that
\begin{equation*}
\left<\Phi(y),\Phi(x)\right>
\ =\ \left<y,x\right>u_x
\quad (y\in E).
\end{equation*}
\end{lemma}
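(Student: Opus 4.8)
The plan is to fix $x$ and study the map $y \mapsto \langle \Phi(y),\Phi(x)\rangle$, which is an $A$-module map from $E$ into $A$, and show that it is implemented by right multiplication by some $u_x$ on the piece of $\langle y, x\rangle$ that matters. First I would normalize so that $a = \langle x,x\rangle/\|x\|^2$ has norm $1$, set $q_x = e_a(0,1]$, and recall from Lemma \ref{lem:xv}(e) that $x q_x = x$ and $\Phi(x) q_x = \Phi(x)$, so everything lives inside the (generally non-unital) corner, with $q_x$ open and $A \cap q_x A^{**} q_x$ weak-$*$-dense in $q_x A^{**} q_x$. The key structural input is the orthogonality-preserving property in its "localized" form, Lemma \ref{lem:xv}(a): if $\langle x,y\rangle p = 0$ for an open projection $p$, then $\langle \Phi(x),\Phi(y)\rangle p = 0$. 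The idea is to use this to compare $\langle \Phi(y),\Phi(x)\rangle$ with $\langle y,x\rangle$ after cutting by suitable spectral projections of $a$.

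The heart of the argument is a \emph{separation of variables}. For each $\delta \in (0,1)$ consider the spectral projection $q_\delta := e_a(\delta,1]$; since $a$ lies in a commutative $C^*$-subalgebra, there is $b_\delta \in C^*(a)_+$ which acts as an inverse of $a$ on the range of $q_\delta$, i.e. $a b_\delta = q_\delta$ (here $q_\delta \in A^{**}$ but $a b_\delta$ is a product of elements in $C^*(a)$ approximating it; more precisely one uses $e_a[\delta,1]$-type cutoffs and a limiting argument). Then for any $y \in E$, writing $z := y - x\langle x, y\rangle b_\delta / \|x\|^2$ one checks $\langle x, z\rangle q_\delta = \langle x,y\rangle q_\delta - \langle x,x\rangle b_\delta \langle x,y\rangle q_\delta /\|x\|^2 \cdot(\cdots) = 0$ after arranging the cutoffs correctly (this is the routine computation I will not grind through). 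Applying Lemma \ref{lem:xv}(a) gives $\langle \Phi(x),\Phi(z)\rangle q_\delta = 0$, i.e.
$$
\langle \Phi(x),\Phi(y)\rangle q_\delta \ =\ \langle \Phi(x),\Phi(x)\rangle\, b_\delta\, \langle x,y\rangle q_\delta / \|x\|^2 \quad (y\in E).
$$
Define $u_x^{(\delta)} := \langle \Phi(x),\Phi(x)\rangle b_\delta q_\delta / \|x\|^2 \in q_\delta A^{**} q_\delta$. Taking adjoints and using that $\langle \Phi(y),\Phi(x)\rangle$ is an element of $A$ one rewrites the identity as $\langle \Phi(y),\Phi(x)\rangle q_\delta = \langle y,x\rangle u_x^{(\delta)}{}^{\!*}$; the self-adjointness and centrality of the limiting element will come out of the consistency of these formulas as $\delta$ varies. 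Indeed, for $\delta' < \delta$ one has $q_\delta u_x^{(\delta')} = u_x^{(\delta)}$ by uniqueness (apply Lemma \ref{lem:xv}(c) with $u = u_x^{(\delta')}$, $v = u_x^{(\delta)}$: both satisfy $\langle y,x\rangle(\cdot) = \langle\Phi(y),\Phi(x)\rangle q_\delta$ for all $y$, and one shows $a$ times their difference vanishes). Hence the net $\{u_x^{(\delta)}\}$ is coherent and, being bounded (by the displayed formula, $\|u_x^{(\delta)}\| \le \|\Phi(x)\|^2/\|x\|^2$ roughly), converges weak-$*$ as $\delta \downarrow 0$ to an element $u_x \in q_x A^{**} q_x$ with $\langle \Phi(y),\Phi(x)\rangle q_x = \langle y,x\rangle u_x$ for all $y$; since $\langle y,x\rangle = \langle y,x\rangle q_x$ (as $x q_x = x$), this is the desired identity.

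It remains to see $u_x$ is positive and central in $q_x A^{**} q_x$. Positivity: take $y = x$, giving $\langle \Phi(x),\Phi(x)\rangle = \langle x,x\rangle u_x = \|x\|^2 a u_x$; since the left side is positive and $a$ is a positive element with central cover dominating $q_x$, one deduces $u_x \ge 0$ by a functional-calculus / commutant argument within $C^*(a)'' \cap q_x A^{**} q_x$. Centrality: for $b \in A \cap q_x A^{**}q_x$ one applies the identity with $y$ replaced by $xb^*$-type elements (legitimate by Lemma \ref{lem:xv}(b)) and compares $\langle \Phi(yb),\Phi(x)\rangle = b^*\langle\Phi(y),\Phi(x)\rangle$ with the right-hand side, forcing $u_x$ to commute with a weak-$*$-dense subset of $q_x A^{**}q_x$, hence with all of it.

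The main obstacle I anticipate is the bookkeeping around the non-unital corner $q_x A^{**} q_x$ and the fact that $q_x$, $q_\delta$ are only in the bidual: one must consistently replace $q_\delta$ by genuine elements of $C^*(a)$ when invoking Lemma \ref{lem:xv}(a) (which requires an \emph{open} projection and elements of $E$, not $E^{**}$), and only pass to the weak-$*$ limit at the end. The coherence/uniqueness step tying the $u_x^{(\delta)}$ together, via Lemma \ref{lem:xv}(c), is where the argument could get delicate, since one needs the module-map identity to determine the multiplier uniquely on each spectral band.
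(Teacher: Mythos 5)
Your overall skeleton (localize on the spectral bands $q_\delta=e_a(\delta,1]$, use a ``projection onto $x$'' trick together with Lemma \ref{lem:xv}(a), obtain a coherent family, pass to a weak-$*$ limit) is the same as the paper's, but the two steps you treat as routine are precisely where the real content lies, and as written both fail. First, boundedness: your element $u_x^{(\delta)} = \langle\Phi(x),\Phi(x)\rangle b_\delta q_\delta/\|x\|^2$ contains the factor $b_\delta$, which inverts $a$ on the band $(\delta,1]$ and so has norm of order $1/\delta$; the displayed formula therefore only gives $\|u_x^{(\delta)}\| \lesssim \|\Phi(x)\|^2/(\delta\|x\|^2)$, not a bound uniform in $\delta$. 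Uniform boundedness of this family is essentially equivalent to the inequality $\langle\Phi(x),\Phi(x)\rangle \leq C\,\langle x,x\rangle$, i.e.\ to the quantitative content of the lemma itself, and since $\Phi$ is \emph{not} assumed bounded there is no soft way to get it. The paper needs a separate contradiction argument here: assuming $\|u_{\epsilon_n}\|$ grows like $n^5$, it builds an orthogonal sequence $x_n$ supported on disjoint spectral bands of $a$, forms $y=\sum_n x_n/n^2\in E$ (this is where completeness of $E$ and orthogonality preservation for an infinite family enter), and derives $\|\Phi(y)\|^2\geq (4m-1)^5/m^4$ for all $m$. Some argument of this kind is indispensable and is missing from your proposal.

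Second, centrality and positivity. Your suggestion to ``apply the identity with $y$ replaced by $xb^*$-type elements'' produces nothing: for any $A$-module map and any $b\in A$, replacing $y$ by $yb$ (or by $xb^*$) multiplies both sides of $\langle\Phi(y),\Phi(x)\rangle=\langle y,x\rangle u_x$ by the same factor on the left, so no commutation relation between $u_x$ and the corner $q_xA^{**}q_x$ can be extracted from $A$-linearity alone; the identity only determines $a\,u_x$. Note also that $u_x^{(\delta)}=\langle\Phi(x),\Phi(x)\rangle b_\delta q_\delta$ is a product of two noncommuting positives, so it is not even visibly self-adjoint, and ``consistency as $\delta$ varies'' cannot supply self-adjointness, positivity or centrality. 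The paper obtains centrality by using the orthogonality-preserving hypothesis a \emph{second} time: for an arbitrary norm-one $c\in A_+\cap q_\epsilon A^{**}q_\epsilon$ and $p=e_c(\alpha,\beta)$, it chooses $b_n\uparrow p$ in $C^*(c)$ with $b_nb_{n+1}=b_n$, observes $\langle x_\epsilon b_n, x_\epsilon c_{n+k}\rangle=0$ with $c_n=1-b_n$, and applies Lemma \ref{lem:xv}(a) to get $pu_\epsilon(1-p)=0$ and $(1-p)u_\epsilon p=0$, whence $u_\epsilon$ commutes with $c$ and then with all of $q_\epsilon A^{**}q_\epsilon$; positivity then follows from $u_\epsilon=q_\epsilon\langle\Phi(x_\epsilon),\Phi(x_\epsilon)\rangle q_\epsilon$. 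A smaller but genuine point: your correction term $y-x\langle x,y\rangle b_\delta$ does not become orthogonal to the relevant band, because $b_\delta$ does not commute with $\langle x,y\rangle$; the paper avoids this by setting $x_\epsilon:=xb^{1/2}$ (with $q_\epsilon\leq ab\leq 1$) and using $y-x_\epsilon\langle x_\epsilon,y\rangle$, for which the coefficient $\langle x_\epsilon,x_\epsilon\rangle=ab$ acts as the identity on $q_\epsilon$ from the correct side.
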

\begin{proof}
Without loss of generality we assume that $\|x\| = 1$.
If $\epsilon\in (0,1)$ and $q_\epsilon := e_a(\epsilon, 1]$, pick any $b\in C^*(a)_+$ satisfying ${q_\epsilon}\leq ab\leq 1$ and set $x_\epsilon := xb^{1/2}\in E$. 
Then we have $\left<x_\epsilon
{q_\epsilon},x_\epsilon \right>_{A^{**}} =\left<x_\epsilon, x_\epsilon{q_\epsilon}\right>_{A^{**}} = \left<x_\epsilon  {q_\epsilon},x_\epsilon
{q_\epsilon}\right>_{A^{**}} = {q_\epsilon}$. 
Moreover, 
\begin{equation}
\label{eqt-lim}
b^{1/2} q_\epsilon (b^{1/2} + q_\epsilon/n)^{-1} \uparrow q_\epsilon \quad \textrm{when}\quad n \to \infty.
\end{equation}

Put $u_{\epsilon} := \left<\Phi(x_\epsilon),\Phi(x_\epsilon) \right>
{q_\epsilon} \in A{q_\epsilon}$. 
Consider $c\in q_\epsilon A^{**}q_\epsilon\cap A_+$ to be a norm one element, and set $p :=
e_c(\alpha,\beta)\in {q_\epsilon}A^{**}{q_\epsilon}$ for some
$\alpha < \beta$ in $\mathbb{R}_+$. Let $b_n \in C^*(c)\subseteq
A \cap q_\epsilon A^{**}q_\epsilon$ such that $0\leq  b_n \uparrow p$ and $b_n b_{n+1}
= b_n$ ($n\in \mathbb{N}$).
Set $c_n := 1-b_n$, and observe that
$1\geq c_n \downarrow(1-p)$, $b_n c_{n+k} =0$, $b_np=b_n$,  and
$c_{n+k}(1-p)=1-p$ ($n, k\in \mathbb{N}$). 
Since
$$
\left<x_\epsilon b_n,x_\epsilon c_{n+k}\right> \ =\
b_n{q_\epsilon}\left<x_\epsilon,x_\epsilon\right> c_{n+k} \ =\ b_n
{q_\epsilon} c_{n+k}\ =\ b_n c_{n+k}\ =\ 0,
$$
we have $b_n u_{\epsilon} c_{n+k} = \left<\Phi(x_\epsilon
b_n),\Phi(x_\epsilon c_{n+k})\right>q_\epsilon =0$ (by Lemma \ref{lem:xv}(a)).
By letting $k \to \infty$
and then $n \to \infty$, we see that $pu_{\epsilon} (1-p) =0$, i.e.,
$pu_{\epsilon}= pu_{\epsilon} p$. 
Similarly, we have $p u_\epsilon p
= u_{\epsilon} p$ and so, $p u_\epsilon = u_\epsilon p$. As $c$ can
be approximated in norm by linear combinations of projections of the
form $e_c(\alpha,\beta)$, one concludes that $u_{\epsilon}$ commutes
with an arbitrary element in $A \cap q_\epsilon A^{**}q_\epsilon$.
Thus,
$u_{\epsilon}$ commutes with elements in
${q_\epsilon}A^{**}{q_\epsilon}$ (as $q_\epsilon$ is open). In particular, $u_{\epsilon} =
u_{\epsilon}q_{\epsilon} = {q_\epsilon}u_{\epsilon}{q_\epsilon}=
{q_\epsilon}\left<\Phi(x_\epsilon),\Phi(x_\epsilon)
\right>{q_\epsilon} \in {q_\epsilon}A{q_\epsilon}$, which means
that $u_{\epsilon}\in Z({q_\epsilon}A^{**}{q_\epsilon})_+$.

For any $y\in E$, the element $y - x_\epsilon \left<x_\epsilon ,y\right> \in E$ is
orthogonal to $x_\epsilon  {q_\epsilon}\in E^{**}$. By Lemma \ref{lem:xv}(a),
we have
$$
\left<\Phi(y),\Phi(x_\epsilon )\right>{q_\epsilon}\ =\
\left<y,x_\epsilon \right>\left<\Phi(x_\epsilon ), \Phi(x_\epsilon
)\right>{q_\epsilon}
\ =\ \left<y,x_\epsilon
\right>u_{\epsilon},
$$
which implies that $\left<\Phi(y),\Phi(x)\right>b^{1/2}{q_\epsilon}
= \left<y,x\right>u_{\epsilon}b^{1/2}{q_\epsilon}$ (because
$b^{1/2}{q_\epsilon} = {q_\epsilon}b^{1/2}{q_\epsilon}\in q_\epsilon A^{**}q_\epsilon$).
Now Relation
\eqref{eqt-lim} tells us that 
\begin{align}\label{eq:q-epsilon}
\left<\Phi(y),\Phi(x)\right>{q_\epsilon} \ =\
\left<y,x \right>{u_\epsilon} \qquad (y\in E).
\end{align}

If  $0< \delta \leq \epsilon<1$, we have
$q_\epsilon\leq q_\delta$ and $q_\epsilon A^{**}q_\epsilon\subseteq
q_\delta A^{**}q_\delta$.
Hence, 
$$au_{{\delta}}{q_\epsilon} 
\ =\ \left<x,x\right>u_{{\delta}}{q_\epsilon}
\ =\ \left<\Phi(x),\Phi(x)\right>q_\delta{q_\epsilon}
\ =\ \left<\Phi(x),\Phi(x)\right>{q_\epsilon}
\ =\ a u_\epsilon,$$ 
and Lemma \ref{lem:xv}(c) tells us that
%\begin{equation*}
$
u_\delta q_\epsilon
 = q_\delta u_\delta q_\epsilon
 = q_\delta u_\epsilon
 = q_\delta q_\epsilon u_\epsilon
 = u_\epsilon.
$
%\end{equation*}
By taking adjoint, we see that $u_\delta$ commutes with $q_\epsilon$, which gives
\begin{equation}\label{rel:u-dec}
0\ \leq\ u_\epsilon\ =\
u_\delta^{1/2}q_\epsilon u_\delta^{1/2}\ \leq\ u_\delta \qquad (0<\delta
\leq \epsilon< 1).
\end{equation}

Next, we show that $\{u_\epsilon\}_{\epsilon\in (0,1)}$ is a bounded
set. Suppose on the contrary that there is a decreasing sequence
$\{\epsilon_n\}_{n\in \mathbb{N}}$ with $\|u_{\epsilon_n}\| >
\|u_{\epsilon_{n-1}}\| + n^5$ for every $n\in \mathbb{N}$ (see Relation \eqref{rel:u-dec}). Let $b_n, d_n\in C^*(a)_+$ such that
$e_a(\epsilon_{4n-1}, \epsilon_{4n-2}]\leq b_n \leq
e_a(\epsilon_{4n}, \epsilon_{4n-3}]$ ($\leq q_{\epsilon_{4n}}$) and
$q_{\epsilon_{4n}}\leq ad_n \leq 1$. As $b_n, q_{\epsilon_{4n-1}},
q_{\epsilon_{4n-2}}\in q_{\epsilon_{4n}}A^{**}q_{\epsilon_{4n}}$ and
$u_{\epsilon_{4n}}\in
Z(q_{\epsilon_{4n}}A^{**}q_{\epsilon_{4n}})_+$, we see that
$$\|u_{\epsilon_{4n}}b_n\|\ \geq\
\|u_{\epsilon_{4n}}(q_{\epsilon_{4n-1}} - q_{\epsilon_{4n-2}})\| \ =
\ \|u_{\epsilon_{4n-1}} - u_{\epsilon_{4n-2}}\|\ \geq \ (4n-1)^5.$$
If $x_n:= xb_n^{1/2}d_n^{1/2}$, then $\langle x_n, x_n\rangle =
b_nq_{\epsilon_{4n}} ad_n = b_n$. Moreover, if
$m\neq n$, then
$$\langle x_n,x_m\rangle
\ =\ d_n^{1/2}b_n^{1/2}e_a(\epsilon_{4n}, \epsilon_{4n-3}] a e_a(\epsilon_{4m}, \epsilon_{4m-3}]b_m^{1/2}d_m^{1/2}
\ =\ 0$$
(as $(\epsilon_{4n}, \epsilon_{4n-3}]\cap (\epsilon_{4m},
\epsilon_{4m-3}] = \emptyset$). Let $y:= \sum_{n=1}^\infty
x_n/n^2\in E$ (note that $\|x_n\|^2 = \|b_n\| \leq 1$). 
For any $m\in \mathbb{N}$, we have 
$\langle \Phi(y), \Phi(y)\rangle
\geq \langle\Phi(x_m),\Phi(x_m)\rangle/m^4$ (as $\Phi$ preserves
orthogonality), and by Relation \eqref{eq:q-epsilon},
\begin{eqnarray}\label{eqt:orth-pre-norm}
m^4 \langle \Phi(y), \Phi(y)\rangle
& \geq & \langle\Phi(x_m),\Phi(x_m)\rangle
\quad = \quad \langle \Phi(x_m),\Phi(x)\rangle q_{\epsilon_{4m}} b_m^{1/2}d_m^{1/2}\\
& = & \langle x_m,x\rangle u_{\epsilon_{4m}} b_m^{1/2}d_m^{1/2}
\quad = \quad b_mu_{\epsilon_{4m}}\nonumber
\end{eqnarray}
(since $b_m^{1/2}d_m^{1/2}\in q_{\epsilon_{4m}} A^{**}q_{\epsilon_{4m}}$ and
$u_{\epsilon_{4m}}\in
Z(q_{\epsilon_{4m}}A^{**}q_{\epsilon_{4m}})_+$). Consequently,
$\|\Phi(y)\|^2 \geq (4m-1)^5/m^4$ for all $m\in \mathbb{N}$, which is a
contradiction.

Now, the bounded sequence $\{u_{1/n}\}_{n\in \mathbb{N}}$ in
$(q_xA^{**}q_x)_+$ has a subnet having a weak-*-limit $u_x\in
(q_xA^{**}q_x)_+$. 
As $q_{1/n} \uparrow q_x$, we have
$\bigcup_{n\in \mathbb{N}} q_{1/n} A^{**} q_{1/n}$ being
weak-*-dense in $\bigcup_{n\in \mathbb{N}} q_{1/n} A^{**} q_x$ and
hence also weak-*-dense in $q_xA^{**}q_x$. Thus, $u_x\in Z(q_x
A^{**} q_x)_+$ (as $q_{1/m} u_x = u_x q_{1/m} = u_{1/m}\in Z(q_{1/m} A^{**}
q_{1/m})$ for any $m\in \mathbb{N}$).
By Relation \eqref{eq:q-epsilon} and
Lemma \ref{lem:xv}(e), we have $\left<\Phi(y),\Phi(x)\right> =
\left<\Phi(y),\Phi(x)\right>q_x = \left<y,x\right>u_x$ ($y\in E$).
\end{proof}

\medskip

\begin{theorem}\label{thm-unbdd}
Suppose that $\Phi:E\to F$ is a $\mathbb{C}$-linear map (not assumed to be bounded).
Then $\Phi : E\to F$ is an orthogonality
preserving $A$-module map if and only if
there exists $u\in Z(M(I_E))_+$ (where $I_E\subseteq A$ is the ideal generated by the inner products of elements in $E$) such that
$$
\left<\Phi(x),\Phi(y)\right>\ =\ u\left<x,y\right>\quad (x,y\in E).
$$
In this case, $u$ is unique and $\Phi$ is automatically bounded.
\end{theorem}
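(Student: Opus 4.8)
The ``if'' direction is routine: given $u\in Z(M(I_E))_+$ with $\langle\Phi(x),\Phi(y)\rangle=u\langle x,y\rangle$ for all $x,y\in E$, orthogonality preservation is immediate, expanding $\langle\Phi(xa)-\Phi(x)a,\Phi(xa)-\Phi(x)a\rangle$ and using centrality of $u$ gives $\Phi(xa)=\Phi(x)a$, so $\Phi$ is an $A$-module map; $\|\Phi(x)\|^2=\|u\langle x,x\rangle\|\le\|u\|\,\|x\|^2$ gives boundedness; and $(u-u')I_E=0$ gives uniqueness. For the converse we may, replacing $A$ by $I_E$ (using that $\Phi$ is also an $I_E$-module map and, by Lemma~\ref{lem:xv}(f), $\langle\Phi(x),\Phi(y)\rangle\in I_{F_\Phi}\subseteq I_E$), assume $E$ is full.

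For $x\in E\setminus\{0\}$, Lemma~\ref{lem:uepsilon} gives $u_x\in Z(q_xA^{**}q_x)_+$ with $\langle\Phi(y),\Phi(x)\rangle=\langle y,x\rangle u_x$ for all $y\in E$; set $v_x:=\cv(u_x)\in Z(A^{**})_+$ and $z_x:=\cv(\langle x,x\rangle)$. By Lemma~\ref{lem:cp}, $v_xz_x=v_x$, $\|v_x\|=\|u_x\|$, and $v_xq_x=u_x$; combining with Lemma~\ref{lem:xv}(e) (which gives $\langle y,x\rangle q_x=\langle y,x\rangle$),
\begin{equation}\label{eq:Phiv}
\langle\Phi(y),\Phi(x)\rangle=\langle y,x\rangle v_x\qquad(x\in E\setminus\{0\},\ y\in E).
\end{equation}
From this I extract two coherence relations. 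Taking adjoints and using that $v_x$ is central, $\langle x,y\rangle v_y=\langle\Phi(x),\Phi(y)\rangle=v_x\langle x,y\rangle=\langle x,y\rangle v_x$, whence
\begin{equation}\label{eq:coh-a}
(v_x-v_y)\,\langle x,y\rangle=0\qquad(x,y\in E).
\end{equation}
Also, comparing $\langle\Phi(y),\Phi(xa)\rangle=\langle\Phi(y),\Phi(x)\rangle\,a=\langle y,xa\rangle v_x$ with \eqref{eq:Phiv} applied to $xa$, one gets $\langle y,xa\rangle(v_{xa}-v_x)=0$ for all $y\in E$; since, by Cauchy--Schwarz ($\langle y,w\rangle\langle w,y\rangle\le\|y\|^2\langle w,w\rangle$), the closed ideal generated by $\{\langle y,w\rangle:y\in E\}$ equals that generated by $\langle w,w\rangle$ and hence has central support $\cv(\langle w,w\rangle)$, a weak-$*$-continuity argument yields
\begin{equation}\label{eq:coh-b}
v_{xa}=v_x\,z_{xa}\qquad(x\in E,\ a\in A,\ xa\ne0).
\end{equation}

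Now comes the patching. The crux is a \emph{strong coherence} statement: $v_x\,z_{x'}=v_{x'}\,z_x$ for all $x,x'\in E$ (equivalently, $v_x$ and $v_{x'}$ agree on $z_x\wedge z_{x'}$). Granting this, and the boundedness of the family $\{v_x:x\in E\}$ (so that its least upper bound in the monotone complete von Neumann algebra $Z(A^{**})$ exists), set $u_0:=\sup_{x\in E}v_x\in Z(A^{**})_+$. Since multiplication by a central projection distributes over suprema of bounded families in $Z(A^{**})$, for each $x$ one has $u_0z_x=\sup_{x'}(v_{x'}z_x)=\sup_{x'}(v_xz_{x'})$, and as $v_xz_{x'}\le v_x$ for all $x'$ while $v_xz_x=v_x$, this equals $v_x$; thus by \eqref{eq:Phiv} and $\langle y,x\rangle z_x=\langle y,x\rangle$ we get $\langle\Phi(y),\Phi(x)\rangle=u_0\langle y,x\rangle$ for all $x,y\in E$. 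Since the left-hand side lies in $I_E$, multiplication by $u_0$ carries $I_E$ into $I_E$; being self-adjoint and central in $A^{**}$, $u_0$ is therefore a central positive multiplier $u$ of $I_E$ with $\langle\Phi(x),\Phi(y)\rangle=u\langle x,y\rangle$, and automatic boundedness and uniqueness follow as before. Boundedness of $\{v_x\}$ is obtained by adapting the boundedness argument inside the proof of Lemma~\ref{lem:uepsilon}: if it failed, using \eqref{eq:coh-b} and cut-downs of the various $\langle x,x\rangle$ by spectral projections one would produce norm-one vectors $w_n$ with $\|\langle\Phi(w_n),\Phi(w_n)\rangle\|$ growing rapidly and (after arranging sufficient mutual orthogonality, via strong coherence together with a maximal family of elements with mutually orthogonal central covers) form $y=\sum_n w_n/n^2\in E$; Cauchy--Schwarz applied to $\langle\Phi(y),\Phi(w_n)\rangle$ then forces $\|\Phi(y)\|=\infty$, a contradiction.

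The main obstacle is exactly the strong coherence relation together with this boundedness, in full generality. What \eqref{eq:coh-a}--\eqref{eq:coh-b} give for free is only agreement of $v_x$ and $v_{x'}$ on the central support $\cv(\langle x,x'\rangle)$, which may be far smaller than $z_x\wedge z_{x'}$ and may even vanish; and one cannot simply ``cut off'' the part of an element $x$ supported off a (typically non-open) central projection, so the patching does not reduce cleanly to a maximal family with pairwise orthogonal central covers --- for non-$\sigma$-unital $A$ the covers of such a family need not sum to $1$, and already for $A=C_0(\mathbb R)$ the naive sum of the $v_{x_i}$ over such a family fails to be a multiplier. Overcoming this requires pushing \eqref{eq:coh-a}--\eqref{eq:coh-b} through Cauchy--Schwarz in the module, further spectral cut-downs, and the weak-$*$-density of $A\cap pA^{**}p$ in $pA^{**}p$ for open projections $p$; this ``open projection'' machinery is what replaces the softer surgery available in the real-rank-zero or commutative cases.
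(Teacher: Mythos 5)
Your ``if'' direction, the uniqueness and automatic boundedness, the reduction to the case $I_E=A$, and the local data $v_x=\cv(u_x)$ with your two coherence relations are all correct consequences of Lemmas \ref{lem:uepsilon}, \ref{lem:cp} and \ref{lem:xv}. But the argument stops exactly where the theorem's difficulty begins: everything hinges on the ``strong coherence'' identity $v_x\,\cv(\langle x',x'\rangle)=v_{x'}\,\cv(\langle x,x\rangle)$ for \emph{all} pairs $x,x'\in E$ and on the boundedness of the whole family $\{v_x:x\in E\}$, and neither is proved --- you explicitly flag them as ``the main obstacle'' and offer only a gesture towards ``open projection machinery''. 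As you yourself note, your relations only force $v_x$ and $v_{x'}$ to agree over the central support of $\langle x,x'\rangle$, which can vanish even when the central covers of $\langle x,x\rangle$ and $\langle x',x'\rangle$ overlap, and the proposed remedy (a maximal family with mutually orthogonal central covers) is one you concede fails already for $A=C_0(\mathbb{R})$. The boundedness sketch suffers from the same circularity: to run the $\sum_n w_n/n^2$ contradiction you need the $w_n$ mutually orthogonal in the $A$-valued inner product, yet an a priori unbounded family $\{v_x\}$ could sit entirely over a single central cover, and the only tool you invoke to ``arrange sufficient mutual orthogonality'' is precisely the strong coherence you have not established. Note also that strong coherence is essentially equivalent to the conclusion itself (a posteriori it reads $v_x=u\,\cv(\langle x,x\rangle)$), so assuming it is very close to assuming the theorem locally. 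This is a genuine gap, not a missing routine verification.

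For comparison, the paper's proof never needs pairwise coherence of all the $v_x$ nor boundedness of the whole family. It fixes a maximal family $M$ of mutually orthogonal norm-one elements of $E$ (orthogonal for the $A$-valued inner product, not merely in central covers), works with the finite sums $x_S$, $S\subseteq M$ finite, and shows that $\{u_{x_S}\}_S$ is an \emph{increasing} net using $q_{x_S}=\bigvee_{x\in S}q_x$ and Lemma \ref{lem:xv}(c). Boundedness is then proved only for this net, by a comparison-of-projections argument (\cite[V.1.6]{Takesaki}) applied to $q_{x_{S(n)}}-q_{x_{S(n-1)}}$ together with a series $z=\sum_n x_n/n^2$ built from the differences $x_{S(n)}-x_{S(n-1)}$, which are orthogonal by construction since they are sums over disjoint subsets of $M$. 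A weak-$*$ cluster point $v$ of $\{\cv(u_{x_S})\}$ then satisfies $\langle\Phi(y),\Phi(x)\rangle=\langle y,x\rangle v$ for $x\in M$, and the maximality of $M$ (the element $z(v-v_z)\in E$ is orthogonal to every member of $M$, hence zero) yields $zv=zv_z$ for all $z\in E\cdot I$, whence $\langle\Phi(x),\Phi(y)\rangle q=v\langle x,y\rangle q$ on the central open projection $q$ supporting the ideal $I$ generated by $\{\langle y,x\rangle:y\in E,\ x\in M\}$; the final surgery transfers $v$ to an element of $Z(M(A))_+$ via the injection $a\mapsto qa$. If you want to complete your write-up, you should either reproduce this maximal-family patching or supply genuine proofs of your strong coherence and global boundedness claims; at present the heart of the theorem is missing.
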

\begin{proof}
As $E$ is a full Hilbert $I_E$-module, it is easy to see that $u$ is unique if it exists, and in this case, $\|\Phi\|^2 \leq \|u\|$.

The sufficiency is obvious, and we will establish the necessity in the following. 
Since
$I_{F_\Phi}\subseteq I_E$  (see Lemma \ref{lem:xv}(f)), by replacing
$\Phi$ with the induced map $\Phi_0: E \to F_\Phi:= \overline {\Phi(E)}$, we may assume
that $I_E = A$.

Let $M$ be a maximal family of orthogonal norm-one elements in $E$,
and $\mathcal{F}$ be the collection of all non-empty finite subsets
of $M$. 
If $\{y,z\}\in \mathcal{F}$, then by
Lemma \ref{lem:uepsilon},
\begin{align*}
\langle y, y\rangle u_y\ =\ \langle \Phi(y), \Phi(y)\rangle
\ =\ \langle \Phi(y), \Phi(y + z)\rangle
\ =\ \langle y, y \rangle u_{y+z},
\end{align*}
which implies that $\|y(u_{y+z} - u_y)\|_{E^{**}}^2 \leq \|u_{y+z} - u_y\| \|\langle
y,y\rangle (u_{y+z} - u_y)\| = 0$, and so,
\begin{equation}\label{eqt:yuy+z}
yu_y\ =\ yu_{y+z}.
\end{equation}
Moreover, $\langle y, y\rangle q_{y+z} = \langle
y, y+z \rangle q_{y+z} = \langle y, y\rangle$ (by Lemma
\ref{lem:xv}(e)) and thus $q_y\leq q_{y+z}$ (by Lemma
\ref{lem:xv}(c)). 
On the other hand, if $p\in {\rm Proj}_1(A^{**})$
such that $q_y\leq p$ and $q_z \leq p$, then $\left< y+z, y+z
\right> p = \left< y , y \right> q_yp + \left< z, z \right> q_zp =
\left< y+z, y+z \right>$, which tells us that $q_{y+z}\leq p$ (again by Lemma \ref{lem:xv}(c)).
Thus, $q_{y+z} = q_y \vee q_z$ in
${\rm Proj}_1(A^{**})$. 
Inductively, if $S\in \mathcal{F}$ and $x_S := \sum_{x\in S} x$, then by Lemma \ref{lem:uepsilon} as Relation \eqref{eqt:yuy+z}, 
\begin{equation}
\label{eqt-v-os}
\left<\Phi(y),\Phi(x)\right> \ =\ \left<y,x\right>u_{x}
\ =\ \left<y,x\right> u_{x_S} \quad (y\in
E; x\in S), 
\end{equation}
\begin{equation}\label{eqt:proj-sup}
q_{x_S}\ =\ {\bigvee}_{x\in S} q_{x} 
\quad (\text{as elements in } {\rm Proj}_1(A^{**}). 
\end{equation}

If $S'\in \mathcal{F}$ with $S\subseteq S'$, then $\langle
x_S,x_S\rangle u_{x_{S'}} = \langle \Phi(x_S), \Phi(x_S)\rangle =
\langle x_S, x_S\rangle u_{x_S}$ (by Relation \eqref{eqt-v-os}).
Thus, Lemma \ref{lem:xv}(c) tells us that
\begin{equation}\label{eqt:q-u}
q_{x_S} u_{x_{S'}}\ =\ q_{x_S} u_{x_S}\ =\ u_{x_S}.
\end{equation}
By taking adjoint, we see that $q_{x_S}$ commutes with $u_{x_{S'}}$, and
Relation \eqref{eqt:q-u} implies that $\{u_{x_S}\}_{S\in \mathcal{F}}$ is an
increasing net in $A^{**}_+$.

We now show that $\{u_{x_S}\}_{S\in \mathcal{F}}$ is a bounded net.
Suppose on the contrary that there is an increasing sequence
$\emptyset \subsetneq S(0) \subsetneq S(1) \subsetneq ...$ in $\mathcal{F}$ with
$$
\|u_{x_{S(n)}}\|\ \geq\ \|u_{x_{S(n-1)}}\| + n^5 \qquad (n\in \mathbb{N})
$$
(notice that $\|u_{x_S}\| \leq \|u_{x_{S'}}\|$ if $S\subseteq S'$).
Denote by 
$y_n:= \sum_{x\in S(n)\setminus S(n-1)} x = x_{S(n)} - x_{S(n-1)}$ ($n\in \mathbb{N}$).
By
\cite[V.1.6]{Takesaki}, one has a partial isometry $w\in A^{**}$
such that $q_{x_{S(n)}} - q_{x_{S(n-1)}} = q_{x_{S(n-1)}}\vee q_{y_n} -
q_{x_{S(n-1)}} = w(q_{y_{n}} - q_{x_{S(n-1)}}\wedge q_{y_n})w^*$, 
which implies 
\begin{eqnarray*}
u_{x_{S(n)}}  
\quad = \quad u_{x_{S(n)}}^{1/2}q_{x_{S(n)}}u_{x_{S(n)}}^{1/2}
& \leq & u_{x_{S(n)}}^{1/2}(q_{x_{S(n-1)}} + w q_{y_n} w^*)u_{x_{S(n)}}^{1/2}\\
& = & u_{x_{S(n-1)}} + u_{x_{S(n)}}^{1/2}wq_{y_n}w^*u_{x_{S(n)}}^{1/2}
\end{eqnarray*}
(see also \eqref{eqt:q-u}). 
On the other hand, by \eqref{eqt:proj-sup} and Lemma \ref{lem:cp}(b),
\begin{eqnarray*}
\lefteqn{u_{x_{S(n)}}^{1/2}wq_{y_n}w^*u_{x_{S(n)}}^{1/2}}\qquad \\
& = & \cv(u_{x_{S(n)}}^{1/2})q_{x_{S(n)}}wq_{y_n}w^*q_{x_{S(n)}}
\cv(u_{x_{S(n)}}^{1/2})
\ =\ q_{x_{S(n)}}wq_{y_n}\cv(u_{x_{S(n)}}^{1/2})
\cv(u_{x_{S(n)}}^{1/2})w^*q_{x_{S(n)}}\\
& = & q_{x_{S(n)}}wq_{y_n}q_{x_{S(n)}}
\cv(u_{x_{S(n)}}^{1/2})\cv(u_{x_{S(n)}}^{1/2})w^*q_{x_{S(n)}}
\ =\ q_{x_{S(n)}}wq_{y_n}u_{x_{S(n)}}w^*q_{x_{S(n)}}.
\end{eqnarray*}
Consequently, $u_{x_{S(n)}} - u_{x_{S(n-1)}} \leq q_{x_{S(n)}}wq_{y_n}u_{x_{S(n)}}w^*q_{x_{S(n)}}$,
which gives $\|q_{y_n}u_{x_{S(n)}}\| > n^5$. Let $a_n :=
\frac{\langle y_n,y_n\rangle}{\|y_n\|^2}$.
Since $\{a_nb: b\in C^*(a_n)\}$ is a norm-dense ideal of $C^*(a_n)$, there
is $b_n\in C^*(a_n)_+$ such that 
$$\|a_nb_n\|\ \leq\ 1 \qquad \text{and} \qquad \|a_n
b_n u_{x_{S(n)}}\|\ >\ n^5.$$
Define $x_n:= y_nb_n^{1/2}/\|y_n\|$. Then
clearly $\{x_n\}_{n\in\mathbb{N}}$ is an orthogonal sequence with
$\langle x_n, x_n\rangle = a_nb_n$. Let $z:= \sum_{n=1}^\infty
x_n/n^2\in E$ (notice that $\|x_n\| \leq 1$).
As in \eqref{eqt:orth-pre-norm}, since $\Phi$ preserves
orthogonality, for any $m\in \mathbb{N}$,
$$
\langle \Phi(z), \Phi(z)\rangle
\ \geq \ b_m^{1/2}\langle y_m,y_m\rangle u_{x_{S(m)}} b_m^{1/2}/(m^4 \|y_m\|^2)
\ = \ a_m b_m u_{x_{S(m)}}/m^4
$$
(because of Relation \eqref{eqt-v-os} as well as the facts that $b_m^{1/2}\in q_{x_{S(m)}}A^{**}q_{x_{S(m)}}$ and $u_{x_{S(m)}}\in Z(q_{x_{S(n)}} A^{**} q_{x_{S(n)}})_+$).
This gives the
contradiction that $\|\Phi(z)\|^2 > m$ for all $m\in \mathbb{N}$.

For any $x\in E$, we set $v_x:= \cv(u_x)$. By Lemmas
\ref{lem:uepsilon}, 
\ref{lem:cp}(b) and \ref{lem:xv}(e), we have
\begin{align}
\label{eqt:theta-ux}
\left<\Phi(y),\Phi(x)\right>\ =\
\left<y,x\right>q_xv_x \ = \ \left<y,x\right> v_x \qquad (y\in E).
\end{align}
Moreover, by Lemma \ref{lem:cp}(b), the net $\{v_{x_S}\}_{S\in \mathcal{F}}$ is also bounded.
Let $v\in Z(A^{**})_+$ be the weak-*-limit of a subnet of
$\{v_{x_S}\}_{S\in \mathcal{F}}$.
Note that if $S\in \mathcal{F}$ and $x\in S$, then by Lemmas \ref{lem:xv}(e) and \ref{lem:cp}(b) as well as Relations \eqref{eqt:proj-sup} and \eqref{eqt:q-u}, we have $\langle y, x \rangle v_{x_S} = \langle y, x \rangle q_x q_{x_S}v_{x_S} = \langle y, x \rangle u_x = \left<\Phi(y),\Phi(x)\right>$ ($y\in E$).
Therefore, 
\begin{align}\label{eq:ideal-u} \left<\Phi(y),\Phi(x)\right>\ = \
\left<y,x\right>v \quad (y\in E, x\in M).
\end{align}
If $I$ is the ideal of $A$ generated by $\{\left<y,x\right>: y\in E, x\in
M\}$, then $Iv\subseteq A$.
For any $z\in E\cdot I\setminus \{0\}$, one has $zv\in E$. 
On the other hand, as $\langle z, z \rangle v_z\in A$ (see
\eqref{eqt:theta-ux}), we know that $zv_z\in E$ (by Lemma \ref{lem:xv}(b)).
Furthermore, one has $\left<x,z\right> v_z =
\left<\Phi(x),\Phi(z)\right>
 = v \left<x,z\right> = \left<x,z\right> v$ if $x\in M$.
This shows that the element $z(v-v_z)$ in $E$ is orthogonal to any $x\in M$. 
This forces $zv = zv_z$ (by the maximality of $M$).
As a consequence,
$$
\left<\Phi(x),\Phi(y)\right>a\ =\ \left<x,ya\right>v_{ya} \ = \ \left<x,y\right>av \quad(x,y\in E, a\in I).
$$
If $q$ is the central open projection in $A^{**}$ with $I =
A\cap qA^{**}q$, then $q$ is the weak-*-limit of a net in $I$, and we have
\begin{align}\label{eq:zuz}
\left<\Phi(x),\Phi(y)\right>q\ =\ v \left<x,y\right>q \quad(x,y\in E).
\end{align}

We now claim that $\phi: a\mapsto qa$ is an injection from $A$ onto
$qA$. Indeed, if $a\in \ker\phi$, then $\langle x,
ya \rangle = \langle x, y \rangle qa = 0$ (for every $x\in M$ and $y\in E$), and
the maximality of $M$ as well as the fullness of $E$ will imply that
$a=0$. Consequently, $\phi$ induces a $*$-isomorphism $\tilde \phi:
M(A) \rightarrow M(qA)$. By Equation \eqref{eq:zuz} and the fullness
of $E$, we see that $v$ induces an element $m\in Z(M(qA))_+$ such
that $q\left<\Phi(x),\Phi(y)\right> = m (q\left<x,y\right>)$ $(x,y\in
E)$. If $u := (\tilde \phi)^{-1}(m)$, then $u\in Z(M(A))_+$ and the
injectivity of $\phi$ gives the required relation
$$
\left<\Phi(x),\Phi(y)\right>\ =\ u \left<x,y\right> \quad(x,y\in E).
$$\end{proof}

\medskip

\begin{remark}\label{rem:rt-mult}
(a) We denote by $u_\Phi$ the unique element in
$Z(M(I_E))_+$ associated with $\Phi$ as in Theorem \ref{thm-unbdd},
and we set $w_\Phi := u_\Phi^{1/2}$. 

\smnoind
(b) Suppose that $v\in M(I_E)$. 
Since $E$ is a Hilbert $I_E$-module, it becomes a unital right Banach $M(I_E)$-module in a canonical way. 
We denote by $R_v: E\to E$ the right multiplication of $v$, i.e.\ $R_v(x) = xv$ ($x\in E$). 
\end{remark}

\medskip

\begin{corollary}\label{cor:adj-inj}
Suppose that $\Phi$ is an orthogonality preserving $A$-module map.

\smnoind
(a) $I_{F_\Phi} = \overline{u_\Phi I_E}$ and
$\ker \Phi = \ker R_{w_\Phi}$.
Moreover, there is a Hilbert
$A$-module isomorphism $\Theta: \overline{Ew_\Phi} \to F_\Phi$ such that $\Phi = \Theta\circ R_{w_\Phi}$.
Consequently, the induced map $\Phi_0: E\rightarrow F_\Phi$ is adjointable with $\Phi_0^*$ being orthogonality preserving.

\smnoind
(b) If $\Phi$ is injective, then $\Phi^{-1}: \Phi(E) \to E$ is also orthogonality preserving.

\smnoind
(c) If $I_{F_\Phi} = I_E$, then $E{w_\Phi}$ is dense in $E$ and $\Phi$ is injective.
\end{corollary}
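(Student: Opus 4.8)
The plan is to deduce everything from Theorem \ref{thm-unbdd}. Write $u := u_\Phi \in Z(M(I_E))_+$ and $w := w_\Phi = u^{1/2}$, so $w$ is central and self-adjoint in $M(I_E)$ and $\langle\Phi(x),\Phi(y)\rangle = u\langle x,y\rangle$ for all $x,y\in E$. The first observation is that $w$, being central in $M(I_E)$, commutes with every inner product $\langle x,y\rangle\in I_E$, so $\langle xw,yw\rangle = w\langle x,y\rangle w = w^2\langle x,y\rangle = u\langle x,y\rangle = \langle\Phi(x),\Phi(y)\rangle$; thus $R_w$ and $\Phi$ carry exactly the same inner-product data. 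In particular $\|R_w(x)\| = \|\Phi(x)\|$ for all $x$, whence $\ker\Phi = \ker R_{w_\Phi}$. For the ideal identity, since $\Phi(E)$ is dense in $F_\Phi$ and the inner product is jointly continuous, $I_{F_\Phi}$ is the closed linear span of $\{\langle\Phi(x),\Phi(y)\rangle:x,y\in E\} = \{u\langle x,y\rangle:x,y\in E\}$, which equals $\overline{u_\Phi I_E}$. Finally define $\Theta(xw) := \Phi(x)$; this is well defined because $xw = x'w$ forces $R_w(x-x') = 0$, hence $\Phi(x) = \Phi(x')$ by the kernel identity. By construction $\Theta$ is $A$-linear and inner-product preserving ($\langle\Theta(xw),\Theta(yw)\rangle = \langle\Phi(x),\Phi(y)\rangle = \langle xw,yw\rangle$), hence isometric, so it extends to an isometric $A$-module map of $\overline{Ew_\Phi}$ onto the closure of $\Phi(E)$, namely $F_\Phi$; a surjective inner-product-preserving isometry between Hilbert $A$-modules is a Hilbert $A$-module isomorphism, and $\Phi = \Theta\circ R_{w_\Phi}$ by definition.

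For the adjointability statement, write $\Phi_0 = \Theta\circ R_{w_\Phi}$ and propose $\Phi_0^* := R_{w_\Phi}\circ\Theta^{-1}\colon F_\Phi\to E$. To check this is the adjoint, write $\xi = \Theta(\gamma)$ with $\gamma\in\overline{Ew_\Phi}\subseteq E$; then $\langle\Phi_0(x),\xi\rangle = \langle xw,\gamma\rangle = w\langle x,\gamma\rangle$ while $\langle x,\Phi_0^*(\xi)\rangle = \langle x,\gamma w\rangle = \langle x,\gamma\rangle w$, and these agree because $\langle x,\gamma\rangle\in I_E$ commutes with the central element $w$. Since an adjointable map between Hilbert modules is automatically a bounded $A$-module map, $\Phi_0^*$ is one; and it preserves orthogonality because $\langle\Phi_0^*(\xi),\Phi_0^*(\eta)\rangle = w\langle\xi,\eta\rangle w = u\langle\xi,\eta\rangle$.

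For (b), if $\Phi$ is injective then $R_{w_\Phi}$ is injective by the kernel identity, and $\Phi^{-1}$ is clearly an $A$-module map on the submodule $\Phi(E)$. Given $\langle\Phi(x),\Phi(y)\rangle = u\langle x,y\rangle = 0$, I would test injectivity of $R_{w_\Phi}$ on $y\langle y,x\rangle\in E$: using centrality of $w$, $\langle R_{w_\Phi}(y\langle y,x\rangle),R_{w_\Phi}(y\langle y,x\rangle)\rangle = w\langle x,y\rangle\langle y,y\rangle\langle y,x\rangle w \le \|y\|^2\, u\,\langle x,y\rangle\langle y,x\rangle = 0$, so $R_{w_\Phi}(y\langle y,x\rangle) = 0$, hence $y\langle y,x\rangle = 0$, and therefore $\langle x,y\rangle\langle y,x\rangle = \langle x,y\langle y,x\rangle\rangle = 0$, i.e.\ $\langle x,y\rangle = 0$. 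For (c), the hypothesis $I_{F_\Phi} = I_E$ together with (a) gives $\overline{u_\Phi I_E} = I_E$; then $b_n := u(u+1/n)^{-1}\in M(I_E)$ is an approximate unit for $I_E$ (from the estimate $\|b_n(ud) - ud\|\le\frac1n\|d\|$ for $d\in I_E$). For $x\in E$ write $x = ya$ with $y\in E$, $a\in I_E$ (Cohen factorisation); then $x\,u(u+1/n)^{-1} = y(ab_n)\to x$ in norm, while $x\,u(u+1/n)^{-1} = \big(x\,u^{1/2}(u+1/n)^{-1}\big)w_\Phi\in Ew_\Phi$ because $u^{1/2}(u+1/n)^{-1}\in M(I_E)$ commutes with $w_\Phi$; hence $\overline{Ew_\Phi} = E$. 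And if $xw_\Phi = 0$ then $u\langle x,x\rangle = w\langle x,x\rangle w = \langle xw,xw\rangle = 0$, so $b_n\langle x,x\rangle = 0$ for all $n$, whence $\langle x,x\rangle = 0$ on letting $n\to\infty$; thus $\ker\Phi = \ker R_{w_\Phi} = 0$.

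The only genuinely delicate points I anticipate are the bookkeeping around the central multiplier $w_\Phi$ — that $\Theta$ is well defined, and that $R_{w_\Phi}\circ\Theta^{-1}$ is literally the adjoint of $\Phi_0$ — and the two ``no zero divisors'' manipulations in (b) and (c), where one must pass from $u\langle x,y\rangle = 0$ to $\langle x,y\rangle = 0$. In each case the resolution is short once one systematically uses that $w_\Phi$ commutes with the inner products lying in $I_E$, combined with the injectivity of $R_{w_\Phi}$ in (b) and with the approximate unit $u(u+1/n)^{-1}$ of $I_E$ in (c); beyond this the corollary is essentially a repackaging of Theorem \ref{thm-unbdd}.
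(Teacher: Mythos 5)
Your argument is correct, and for part (a) it is essentially the paper's own proof (kernel identity from $\|\Phi(x)\|=\|xw_\Phi\|$, the map $\Theta(xw_\Phi):=\Phi(x)$, and $\Phi_0^*=R_{w_\Phi}\circ\Theta^{-1}$), only with the adjointability check written out explicitly where the paper says ``it is easy to see''. In (b) and (c) you take slightly different routes. For (b), the paper first proves the auxiliary fact that $a\in I_E$ with $au_\Phi=0$ forces $a=0$: indeed $aw_\Phi=0$, so $xa\in\ker R_{w_\Phi}=\ker\Phi$ for every $x\in E$, and injectivity of $\Phi$ together with fullness of $E$ as a Hilbert $I_E$-module gives $a=0$; applying this to $a=\langle y,x\rangle$ then kills $u_\Phi\langle x,y\rangle=0$. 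You instead feed the single test element $y\langle y,x\rangle$ into the injective map $R_{w_\Phi}$ and use $\langle y,y\rangle\le\|y\|^2$ plus centrality of $w_\Phi$; this is equally valid, though the paper's version is a bit shorter and isolates the useful ``no zero divisors'' statement. For (c), the paper argues that $\overline{u_\Phi I_E}=I_E$ makes $w_\Phi I_E\supseteq w_\Phi(w_\Phi I_E)$ dense in $I_E$, hence $Ew_\Phi=(E\cdot I_E)w_\Phi$ dense in $E$, and then gets injectivity of $R_{w_\Phi}$ by noting that $xw_\Phi=0$ makes $x$ orthogonal to the dense submodule $Ew_\Phi$; you achieve both conclusions with the functional-calculus approximate unit $u_\Phi(u_\Phi+\tfrac1n)^{-1}$ of $I_E$, which is self-contained and quantitative but does essentially the same job. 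All the delicate points you flag (well-definedness of $\Theta$, commutation of $w_\Phi$ with inner products, the extended right $M(I_E)$-action via Cohen factorisation) are handled correctly, so the proposal stands as a complete proof.
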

\begin{proof}
(a) The first equality follows directly from Theorem \ref{thm-unbdd}.
As
$\|\Phi(x)\| = \|R_{w_\Phi}(x)\|$ ($x\in E$), we see that $\ker \Phi
= \ker R_{w_\Phi}$.
Thus, we can define $\Theta: Ew_\Phi \to F$ by
$\Theta(R_{w_\Phi}(x)) := \Phi(x)$.
Since $\Theta$ preserves the
$A$-valued inner products, it extends to a Hilbert $A$-module isomorphism from $\overline{Ew_\Phi}$ onto $F_\Phi$ that satisfies the required condition.
Furthermore, it is easy to see that both $R_{w_\Phi}: E\to \overline{Ew_\Phi}$ and $\Theta$ are adjointable, and so is $\Phi_0$.
Finally, as $\Phi_0^* = R_{w_\Phi}\circ \Theta^{-1}$, we see that $\Phi_0^*$ also preserves orthogonality.

\smnoind (b) Suppose that $a\in I_E$ with $au_\Phi = 0$.
Then $aw_\Phi = 0$ as $w_\Phi\in C^*(u_\Phi)$ and so, $xa \in \ker \Phi$ for any $x\in E$ (by part (a)).
As $\Phi$ is injective and $E$ is a full Hilbert $I_E$-module, we have $a = 0$.
Consequently, if $x,y\in E$ satisfying $\langle \Phi(x), \Phi(y) \rangle = 0$, then by Theorem \ref{thm-unbdd}, $\langle x, y \rangle = 0$.

\smnoind (c) Part (a) tells us that $u_\Phi I_E$ is dense in $I_{F_\Phi} = I_E$,
and so, $w_\Phi I_E \supseteq w_\Phi (w_\Phi I_E)$ is dense
in $I_E$. Consequently, $Ew_\Phi = (E\cdot I_E)w_\Phi $ is dense in $E$.
By part (a) again, we see that $E$ is isomorphic to $F_\Phi$. Moreover,
if $x\in \ker  R_{w_\Phi}$, then $\langle x, y w_\Phi\rangle =
\langle xw_\Phi, y\rangle = 0$ for any $y\in E$, which implies that $x =
0$.
Consequently, part (a) tells us that $\ker \Phi = \{0\}$.
\end{proof}

\medskip

By Corollary \ref{cor:adj-inj}(a), if $\Phi:E\rightarrow F$ is an
orthogonality preserving $A$-module map with dense range, then $F$
and $\Phi$ can be represented by an element $w_\Phi\in Z(M(I_E))_+$,
up to an isomorphism.
On the other hand, $\Phi$ may not have closed range even if it is injective (see Example \ref{eg-I}(b) below), and Corollary \ref{cor:adj-inj}(b) does not give us any good information about $\Phi^{-1}$. 
Furthermore, it is not true that all orthogonality preserving $A$-module maps are adjointable (see Example \ref{eg-I}(c) below), and it is only true if we restrict the range of the map.

\medskip

\begin{theorem}\label{thm:I+mod+op=isom}
Let $\Phi: E\rightarrow F$ be an orthogonality preserving $A$-module map (not assumed to be bounded), $F_\Phi := \overline{\Phi(E)}$ and $I_E$ be the ideal generated by the inner products of elements in $E$. 

\smnoind
(a) If $I_{F_\Phi} = I_E$, there is a Hilbert $A$-module isomorphism $\Theta: E\to F_\Phi$ such that $\Phi(x) = \Theta(xw_\Phi)$ ($x\in E$).

\smnoind
(b) If $\Phi$ is bijective, then $I_F = I_E$ and there is a unique
invertible $w\in Z(M(I_E))_+$ such that $x\mapsto \Phi(x)w^{-1}$ is a Hilbert $A$-module isomorphism from $E$ onto $F$.
\end{theorem}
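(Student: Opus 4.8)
The plan is to deduce Theorem \ref{thm:I+mod+op=isom} almost entirely from Theorem \ref{thm-unbdd} and Corollary \ref{cor:adj-inj}, so the real work has already been done; what remains is to package it correctly.

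For part (a), I would simply invoke Corollary \ref{cor:adj-inj}(a): under the hypothesis $I_{F_\Phi} = I_E$, that corollary already produces a Hilbert $A$-module isomorphism $\Theta: \overline{Ew_\Phi} \to F_\Phi$ with $\Phi = \Theta\circ R_{w_\Phi}$, and Corollary \ref{cor:adj-inj}(c) tells us that in this case $Ew_\Phi$ is dense in $E$, i.e.\ $\overline{Ew_\Phi} = E$. Combining these, $\Theta: E \to F_\Phi$ is the desired isomorphism and $\Phi(x) = \Theta(xw_\Phi)$, which is exactly the claim. So part (a) is essentially a one-line corollary.

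For part (b), assume $\Phi$ is bijective. First I would check $I_F = I_E$. Since $\Phi$ is surjective, $F = \Phi(E) = F_\Phi$, so $I_{F_\Phi} = I_F$. To see $I_F = I_E$: from Theorem \ref{thm-unbdd}, $\langle \Phi(x),\Phi(y)\rangle = u_\Phi\langle x,y\rangle$, so $I_{F_\Phi} = I_F \subseteq \overline{u_\Phi I_E} \subseteq I_E$ (this is the first equality in Corollary \ref{cor:adj-inj}(a)). For the reverse inclusion, I would use injectivity of $\Phi$: by Corollary \ref{cor:adj-inj}(b), $\Phi^{-1}$ is orthogonality preserving, and since $\Phi^{-1}$ is a surjective $A$-module map from $F$ onto $E = E_{\Phi^{-1}}$-range, the same argument gives $I_E = I_{E} \subseteq \overline{u_{\Phi^{-1}} I_F} \subseteq I_F$. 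Hence $I_F = I_E$. Now that $I_{F_\Phi} = I_F = I_E$, part (a) applies and yields an isomorphism $\Theta: E \to F_\Phi = F$ with $\Phi = \Theta \circ R_{w_\Phi}$. It remains to see that $w := w_\Phi$ is invertible in $Z(M(I_E))_+$. Since $\Phi = \Theta \circ R_{w}$ with $\Theta$ a bijection, $R_w: E \to E$ is bijective; in particular $\ker R_w = 0$ and $Ew = E$. From Corollary \ref{cor:adj-inj}(a), $I_{F_\Phi} = \overline{u_\Phi I_E} = \overline{w^2 I_E}$, and we just showed this equals $I_E$; iterating, $wI_E$ is dense in $I_E$. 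I would then argue that a positive central multiplier $w$ of $I_E$ with $R_w$ bijective on $E$ (equivalently, with $w$ not a topological zero-divisor in an appropriate sense) must be invertible in $M(I_E)$: injectivity of $R_w$ forces $w$ to have dense range as a multiplication operator on $I_E$ and trivial kernel, and the open mapping theorem applied to the bijective bounded map $R_w$ (bounded by Theorem \ref{thm-unbdd}) gives a bounded inverse, which one checks is implemented by a central positive multiplier $w^{-1} \in Z(M(I_E))_+$. Finally, $x \mapsto \Phi(x)w^{-1} = \Theta(xw)w^{-1}$; using that $\Theta$ is an $A$-module (indeed $M(I_E)$-module) map and $w$ is central, this equals $\Theta(x)$, so $x \mapsto \Phi(x)w^{-1}$ is the Hilbert $A$-module isomorphism $\Theta$. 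Uniqueness of $w$ follows since $w = w_\Phi$ is uniquely determined by $\langle\Phi(x),\Phi(y)\rangle = w^2\langle x,y\rangle$ and the fullness of $E$ over $I_E$.

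The main obstacle is the last step of part (b): passing from ``$R_w$ is bijective on $E$'' to ``$w$ is invertible in $Z(M(I_E))_+$''. One must be careful that $R_w$ being bijective on the module $E$ genuinely forces invertibility of $w$ in the multiplier algebra, not merely that $w$ is not a zero-divisor. The cleanest route is: bijectivity plus boundedness of $R_w$ (from Theorem \ref{thm-unbdd}) gives, by the open mapping theorem, a bounded two-sided inverse $R_w^{-1}$ on $E$; one checks $R_w^{-1}$ commutes with the $A$-action and is positive and central in the appropriate sense, hence is itself right multiplication by some $w' \in Z(M(I_E))_+$ with $ww' = 1$ on $E$; since $E$ is full over $I_E$, $ww' = 1$ in $M(I_E)$. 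Everything else is bookkeeping with the results already established.
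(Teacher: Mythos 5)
Your part (a) and the first half of part (b) are fine and essentially coincide with the paper's argument: (a) is exactly Corollary \ref{cor:adj-inj}(a)+(c), and your derivation of $I_F=I_E$ by applying Corollary \ref{cor:adj-inj}(b) and then Corollary \ref{cor:adj-inj}(a)/Theorem \ref{thm-unbdd} to $\Phi^{-1}$ is the same device the paper uses. The divergence, and the one genuine gap, is in how you get invertibility of $w_\Phi$. You pass from ``$R_{w_\Phi}$ is bijective on $E$'' to ``$w_\Phi$ is invertible in $M(I_E)$'' via the open mapping theorem plus the assertion that the bounded inverse $R_{w_\Phi}^{-1}$, being an $A$-module map that is ``central in the appropriate sense'', must itself be right multiplication by some $w'\in Z(M(I_E))_+$. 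As stated this is not justified: a bounded $A$-module map $E\to E$ is in general \emph{not} a right multiplication by a multiplier (think of $E=A\oplus A$, where module maps are $2\times 2$ matrices over $M(A)$), so the force of your claim rests entirely on the unproved structural fact that a positive element of $\mathcal{L}(E)$ commuting with all adjointable operators on a full module comes from $Z(M(I_E))$. That fact is true, but it is nontrivial (it amounts to identifying the centre of $\mathcal{L}(E)$ under the Morita equivalence between $\CK(E)$ and $I_E$) and nothing in the paper provides it; alternatively one can argue directly that ``$R_{w_\Phi}$ bounded below on $E$'' forces $w_\Phi$ to be invertible, using centrality, functional calculus on $w_\Phi$ and fullness of $E$, but that argument also has to be written out. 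So the step is fillable, but as written it is the crux of part (b) and is missing.

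The paper avoids this entirely, and in fact you already have the needed ingredients in your own proof of $I_E\subseteq I_F$: applying Theorem \ref{thm-unbdd} to $\Phi^{-1}$ gives $u_{\Phi^{-1}}\in Z(M(I_F))_+$ with $\langle x,y\rangle=\langle\Phi^{-1}\Phi(x),\Phi^{-1}\Phi(y)\rangle=u_{\Phi^{-1}}u_\Phi\langle x,y\rangle$ for all $x,y\in E$; fullness of $E$ over $I_E$ then yields $u_{\Phi^{-1}}u_\Phi a=a$ for every $a\in I_E$, i.e.\ $u_{\Phi^{-1}}u_\Phi=1$ in $M(I_E)$, so $u_\Phi$ (hence $w_\Phi=u_\Phi^{1/2}$) is invertible with no appeal to the open mapping theorem or to any description of module endomorphisms. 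I would replace your last step by this composition argument; the rest of your write-up (the identification $\Phi(x)w^{-1}=\Theta(x)$ via the $M(I_E)$-module property of the bounded map $\Theta$, and uniqueness of $w$ from uniqueness of $u_\Phi$) is correct.
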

\begin{proof}
\smnoind
(a) This follows directly from Corollary \ref{cor:adj-inj}.

\smnoind (b) By Lemma \ref{lem:xv}(f), we have $I_F\subseteq I_E$
and we might assume that $E$ is full.
Notice that $\Phi^{-1}: F\to E$ is an orthogonality preserving
$A$-module map because of
Corollary \ref{cor:adj-inj}(b).
Thus, Theorem \ref{thm-unbdd} gives
$u_{\Phi^{-1}}\in Z(M(I_F))_+$ such that
$$\langle x, y \rangle
\ = \ \langle \Phi^{-1}(\Phi(x)), \Phi^{-1}(\Phi(y)) \rangle \ = \
u_{\Phi^{-1}}u_{\Phi} \langle x, y \rangle \qquad (x,y\in E).$$ As
$E$ is full, the above implies that for any $a\in A$, one has $a =
u_{\Phi^{-1}}u_{\Phi}a \in u_{\Phi^{-1}}I_F \subseteq I_F$ (by Corollary \ref{cor:adj-inj}(a)).
This
shows that $I_F = A$ and $u_\Phi$ is invertible (and so is
$w_\Phi$). Now, part (b) follows directly from part (a) (note that
the uniqueness of $w$ follows from the uniqueness of $u_\Phi$).
\end{proof}

\medskip

We remark that in the case of complex Hilbert spaces (i.e.,  $A= \mathbb{C}$), the condition that $I_{\overline{\Phi(E)}} = I_E$ is the same as $\Phi$ being nonzero.
However, in the general case, one cannot even replace the requirement $I_{{\overline{\Phi(E)}}} = I_E$ in Theorem \ref{thm:I+mod+op=isom}(a) to
$\Phi$ being either injective or surjective (see Example \ref{eg-I}(a)\&(d) below; note that a Hilbert $A$-module isomorphism is isometric).
We remark also that even in the situation of Theorem \ref{thm:I+mod+op=isom}(a), the submodule $\Phi(E)$ need not be closed in $F$ and $w_\Phi$ need not be invertible (see Example \ref{eg-I}(b) below).

\medskip

\begin{example}
\label{eg-I}
(a) Let $A := C[0,1]$, $E := C[0,1]$ and $F := C_0(0,1]$. 
If $a\in A_+$ is given by $a(t) := t$ ($t\in [0,1]$) and $\Phi: E\to F$ is defined by $\Phi(x) := xa$, then $\Phi$ is an injective orthogonality preserving $A$-module map. 
However, there is no isometric $A$-module map from $E$ into $F$. 
Suppose on the contrary that $\Theta: E\to F$ is such a map. 
Then $\Theta(b) = \Theta(1)b$ ($b\in A$). 
Since $f:=\Theta(1)$ is in $C_0(0,1]$, one can find $t_0\in (0,1)$ such that $|f(t)| < 1/2$ for $t\leq t_0$. 
Now, if $b\in A$ such that $\|b\| =1$ and $b$ vanishes on $[t_0,1]$, then $\|\Theta(b)\| \leq 1/2 < 1 =\|b\|$ which is a contradiction. 

\smnoind (b) Let $A := C_0(0,1]$ and $a\in A_+$ be the function
defined by $a(t) := t$ ($t\in (0,1]$). If we set $E := A$ and $F:=A$,
and define $\Phi: E \to F$ by $\Phi(x) := xa$, then $\Phi$ is an
orthogonality preserving $A$-module map with dense range and
$I_{F_\Phi} = A = I_E$, but $\Phi$ is not surjective, and $a =
w_\Phi$ is not invertible in $M(A)$.

\smnoind
(c) Let $A := C_0(0,1)$, $E:=\{f\in A: f(1/2) =0\}$, $F:=A$ and $\Phi: E\to F$ be the canonical injection.
Then $\Phi$ is an orthogonality preserving $A$-module map with closed range and $I_{F_\Phi} = I_E$, but $\Phi$ is not an adjointable map from $E$ into $F$.
Indeed, suppose that $\Phi$ is adjointable, and $g\in F$ with $g(1/2)\neq 0$.
Then $\langle \Phi^*(g) , f \rangle_E - \langle g , f \rangle_F = 0$ for any $f\in E\subseteq F$, which implies that $\Phi^*(g) - g = 0$ (because $0$ is the only element in $F$ being orthogonal to $E$).
Thus, we have a contradiction $g = \Phi^*(g)\in E$.

\smnoind
(d) Let $A = \mathbb{C}\oplus \mathbb{C}$, $E = A$ and $F = \mathbb{C}\oplus 0\subseteq E$. 
Define $\Phi(x) := x(1,0)$ (for any $x\in E$). 
Then $\Phi$ is a surjective orthogonality preserving $A$-module map, but $E \ncong F$. 
\end{example}

\medskip

\section{Extending orthogonality preservers to the linking algebras}

\medskip

For any $x,y\in E$, we define an operator $\theta_{y,x}$ by
$\theta_{y,x}(z):= y \langle x, z\rangle$ ($z\in E$). As usual, we
denote by $\CK(E)$ the closed ideal generated by $\{\theta_{y,x}:
x,y\in E\}$ in the $C^*$-algebra of adjointable maps from $E$
into itself.

\medskip

Let $\tilde E$ be the conjugate Banach space of $E$. 
Recall, from e.g.\ \cite[1.1]{BGR}, that the
$*$-algebra structure on the linking $C^*$-algebra $\mathfrak{L}_E:=
\Big(\begin{array}{cc}
\CK(E) & E\\
\tilde E & I_E
\end{array}\Big)$
is given by $\Big(\begin{array}{cc}
\theta & x\\
\tilde y & a
\end{array}\Big)^* = \Big(\begin{array}{cc}
\theta^* & y\\
\tilde x & a^*
\end{array}\Big)$ 
and
$\Big(\begin{array}{cc}
\theta & x\\
\tilde y & a
\end{array}\Big)\Big(\begin{array}{cc}
\theta' & x'\\
\tilde y' & a'
\end{array}\Big) 
= \Big(\begin{array}{cc}
\theta\theta' +\theta_{x,y'} & \theta(x') + xa'\\
\widetilde{\theta'(y)+y'a} & \langle y, x'\rangle + aa'
\end{array}\Big).$
We set $J_E:E\to \mathfrak{L}_E$ to be the canonical embedding, i.e.\ $J_E(x) := \Big(\begin{array}{cc}
0 & x\\
0 & 0
\end{array}\Big)$.

\medskip

If $u,v\in Z(M(A))$, there are two 
linear maps $L_{v,u}, R_{v,u}: \mathfrak{L}_E \to \mathfrak{L}_E$
given by $L_{v,u}\Big(\begin{array}{cc}
\theta & x\\
\tilde y & a
\end{array}\Big) = \Big(\begin{array}{cc}
\theta \circ R_v & xv\\
\widetilde{yu} & au
\end{array}\Big)$ and $R_{v,u}\Big(\begin{array}{cc}
\theta & x\\
\tilde y & a
\end{array}\Big) = \Big(\begin{array}{cc}
\theta \circ R_v & xu\\
\widetilde{yv} & au
\end{array}\Big).$
It is easy to check that $M_{v,u}:= (L_{v,u}, R_{v,u})$ is in the  multiplier algebra $M(\mathfrak{L}_E)$.

\medskip

As noted in the introduction, it is not obvious to us how to prove
Theorem \ref{thm-unbdd} by extending an orthogonality preserving
$A$-module map to a disjointness preserving map on the linking
algebras, because it is not clear how one can induce a map from
$\CK(E)$ to $\CK(F)$ that is compatible with $\Phi$.
Nevertheless, after proving Theorem \ref{thm-unbdd} and Corollary \ref{cor:adj-inj}, one can show in
Theorem \ref{thm:link-alg} below that this map can be obtained when $F$ is replaced by $F_\Phi$.
Notice that Theorem \ref{thm:link-alg} is an extension of Theorem
\ref{thm-unbdd} because for any $x,y\in E$, one has, by Relations \eqref{rel:Del-J} and \eqref{rel:Del-zero-prod} below,
\begin{equation}\label{eqt:cp-Phi-Delta}
\Big(\begin{array}{cc}
0 & 0\\
0 & \langle \Phi(x), \Phi(y)\rangle
\end{array}\Big)
\ = \ \Delta(J_E(x))^*\Delta(J_E(y))\\
\ = \ \Big(\begin{array}{cc}
0 & 0\\
0 & u\langle x, y\rangle
\end{array}\Big).
\end{equation}
However, we do not know how
to obtain this result without Theorem \ref{thm-unbdd}.

\medskip

\begin{theorem}\label{thm:link-alg}
Suppose that $\Phi:E\to F$ is an $A$-module map (not assumed to be bounded), and $F_\Phi := \overline{\Phi(E)}$.  
Then $\Phi$ is orthogonality preserving if and only if there exists a linear map $\Gamma: \mathfrak{L}_E \rightarrow
\mathfrak{L}_{F_\Phi}$ (respectively, $\Delta: \mathfrak{L}_E \rightarrow \mathfrak{L}_{F_\Phi}$) such that 
\begin{equation}\label{rel:Del-J}
\Gamma \circ J_E\ =\ J_{F_\Phi}\circ \Phi
\quad (\text{respectively, } \Delta \circ J_E \ = \ J_{F_\Phi}\circ \Phi),
\end{equation}
and for any $c,d\in \mathfrak{L}_E$ 
\begin{equation}\label{eqt:disj-pres}
cd = 0 \ \Rightarrow\ \Gamma (c)\Gamma(d)\ =\ 0
\quad (\text{respectively, } c^*d=0 \ \Rightarrow\ \Delta (c)^*\Delta(d) = 0). 
\end{equation}
In this case, one can find $\Gamma$ and $\Delta$ satisfying \eqref{rel:Del-J} as well as $u\in Z(M(I_E))_+$ such that 
for any $c,d\in \mathfrak{L}_E$, 
\begin{equation}\label{rel:Del-zero-prod}
\Gamma (c)\Gamma(d)\ =\ M_{u,u}\Gamma(cd)
\quad \mathrm{and} \quad \Delta (c)^*\Delta(d)\ =\ M_{u,u^{1/2}}\Delta(c^*d).
\end{equation}
\end{theorem}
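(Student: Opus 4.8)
The plan is to prove Theorem \ref{thm:link-alg} by reducing everything to Theorem \ref{thm-unbdd} and Corollary \ref{cor:adj-inj}, and then carrying out a bookkeeping computation inside the linking algebra. The ``only if'' direction is the substantial one; the ``if'' direction is immediate from Relation \eqref{eqt:cp-Phi-Delta}, since the existence of $\Delta$ satisfying \eqref{rel:Del-J} and the second alternative in \eqref{eqt:disj-pres} forces $\langle \Phi(x),\Phi(y)\rangle = 0$ whenever $\langle x,y\rangle = 0$ (and similarly for $\Gamma$, noting that $J_E(x)J_E(y)=0$ always, so one instead uses $J_E(y)^*J_E(x) = \big(\begin{smallmatrix}0&0\\0&\langle y,x\rangle\end{smallmatrix}\big)$ together with the $*$-structure, or applies $\Gamma$ to the product $\big(\begin{smallmatrix}0&x\\0&0\end{smallmatrix}\big)\big(\begin{smallmatrix}0&0\\ \tilde y&0\end{smallmatrix}\big) = \big(\begin{smallmatrix}\theta_{x,y}&0\\0&0\end{smallmatrix}\big)$ — I will pick whichever gives the orthogonality relation cleanly).

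For the ``only if'' direction, first I would invoke Theorem \ref{thm-unbdd} to get the unique $u = u_\Phi \in Z(M(I_E))_+$ with $\langle\Phi(x),\Phi(y)\rangle = u\langle x,y\rangle$, and Corollary \ref{cor:adj-inj}(a) to get the Hilbert $A$-module isomorphism $\Theta : \overline{Ew_\Phi}\to F_\Phi$ with $\Phi = \Theta\circ R_{w_\Phi}$. The key observation is that an isomorphism of Hilbert $A$-modules $\Theta: \overline{Ew_\Phi}\to F_\Phi$ induces a $*$-isomorphism $\CK(\overline{Ew_\Phi})\to\CK(F_\Phi)$ by $\theta\mapsto\Theta\theta\Theta^{-1}$, hence a $*$-isomorphism of the corresponding linking algebras; combined with the natural (non-unital, but $*$-preserving on the relevant corner) inclusion-type maps relating $\mathfrak{L}_E$, $\mathfrak{L}_{\overline{Ew_\Phi}}$ and the multiplier-induced maps $M_{v,u}$, one builds $\Gamma$ and $\Delta$ explicitly. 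Concretely I expect $\Gamma$ to be given on a matrix $\big(\begin{smallmatrix}\theta&x\\ \tilde y&a\end{smallmatrix}\big)$ by something of the shape $\big(\begin{smallmatrix}\Theta\circ R_{w_\Phi}\,\theta\, R_{w_\Phi}\,\Theta^{-1} & \Phi(x)\\ \widetilde{\Phi(y)} & u a\end{smallmatrix}\big)$ (the off-diagonal entries dictated by \eqref{rel:Del-J} and the diagonal ones chosen so that \eqref{rel:Del-zero-prod} holds), with $\Delta$ differing by replacing one $u$ by $u^{1/2}$ in accordance with the $M_{u,u^{1/2}}$ appearing in the statement. The point is that $\Theta(xw_\Phi) = \Phi(x)$ and $\Theta$ preserves inner products, so $\langle\Phi(y),\Phi(x)\rangle = \langle yw_\Phi, xw_\Phi\rangle = w_\Phi\langle y,x\rangle w_\Phi = u\langle y,x\rangle$ reproduces the bottom-right entry of \eqref{rel:Del-zero-prod}, while $\theta_{\Phi(x),\Phi(y')} = \Theta\, R_{w_\Phi}\theta_{x,y'}R_{w_\Phi}\Theta^{-1}$ handles the $\CK$-corner, and $\Theta(\theta(x')w_\Phi)$ versus $\Phi(\theta(x')) $ type identities handle the mixed terms — all of which are routine once the definitions of the linking-algebra multiplication and of $M_{v,u}$ are unwound.

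The main obstacle, and the step that needs genuine care rather than routine calculation, is verifying that the candidate maps $\Gamma,\Delta$ are \emph{well-defined linear maps on all of $\mathfrak{L}_E$} and that the quadratic identities \eqref{rel:Del-zero-prod} hold \emph{for all} $c,d$, not just for $c,d$ in the dense subalgebra of finite-rank-type matrices: because $\Phi$ is not assumed bounded, one must be careful about which entries extend continuously. The resolution is that $\Phi$ \emph{is} automatically bounded by Theorem \ref{thm-unbdd}, so $\Phi = \Theta\circ R_{w_\Phi}$ with both factors bounded and adjointable (Corollary \ref{cor:adj-inj}(a)), and the $\CK$-corner map is a bounded $*$-homomorphism composed with conjugation by the isometric-up-to-$w_\Phi$ map; hence every entry of $\Gamma$ and $\Delta$ is bounded and the identities, being bilinear (resp. sesquilinear) and continuous in each variable, reduce by density to the finite-rank case, where they are the identities $\Theta\theta_{x,y}\Theta^{-1} = \theta_{\Theta x,\Theta y}$ for isomorphisms and the defining relation of $u_\Phi$. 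I would close by remarking that Relation \eqref{eqt:cp-Phi-Delta} is then the special case obtained by applying \eqref{rel:Del-zero-prod} to $c = d = J_E(\cdot)$ and using $J_E(x)^*J_E(y) = \big(\begin{smallmatrix}0&0\\0&\langle x,y\rangle\end{smallmatrix}\big)$ together with $M_{u,u^{1/2}}$ acting on the bottom-right corner as multiplication by $u^{1/2}$ — wait, one should check: $M_{u,u^{1/2}}$ applied to a matrix with only bottom-right entry $a$ gives bottom-right entry $u^{1/2}a$, but $\Delta(J_E(x))^*\Delta(J_E(y))$ should have bottom-right entry $\langle\Phi(x),\Phi(y)\rangle = u\langle x,y\rangle$; so in fact $\Delta(c^*d)$ here has bottom-right entry $u^{1/2}\langle x,y\rangle$, consistent with $\Delta\circ J_E = J_{F_\Phi}\circ\Phi$ only if we track that $\Delta$ is \emph{not} simply $J$-compatible on the bottom corner — this subtlety about how $\Delta$ acts versus $\Gamma$ is exactly the thing to nail down, and it is why the two maps carry different multiplier twists $M_{u,u}$ and $M_{u,u^{1/2}}$.
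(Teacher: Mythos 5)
Your overall strategy is the same as the paper's: reduce to the adjointable map $\Phi_0\colon E\to F_\Phi$ via Theorem \ref{thm-unbdd} and Corollary \ref{cor:adj-inj}(a), send the compact corner to $\Phi_0\theta\Phi_0^{*}$ (your $\Theta\circ R_{w_\Phi}\,\theta\,R_{w_\Phi}\,\Theta^{-1}$ is exactly this map), and define $\Gamma,\Delta$ entrywise with multiplier twists. The genuine gap is precisely at the point you yourself defer as ``the thing to nail down'': your concrete candidate does not satisfy \eqref{rel:Del-zero-prod}, and no density/continuity argument can repair a wrong formula. Indeed, with $\Gamma\big(\begin{smallmatrix}\theta&x\\ \tilde y&a\end{smallmatrix}\big)=\big(\begin{smallmatrix}\Phi_0\theta\Phi_0^{*}&\Phi(x)\\ \widetilde{\Phi(y)}&ua\end{smallmatrix}\big)$, take $c=\big(\begin{smallmatrix}0&0\\ \tilde y&0\end{smallmatrix}\big)$ and $d=J_E(x')$, so that $cd=\big(\begin{smallmatrix}0&0\\ 0&\langle y,x'\rangle\end{smallmatrix}\big)$: then $\Gamma(c)\Gamma(d)$ has $(2,2)$-entry $\langle\Phi(y),\Phi(x')\rangle=u\langle y,x'\rangle$, while $M_{u,u}\Gamma(cd)$ has $(2,2)$-entry $u^2\langle y,x'\rangle$; likewise the $(1,1)$-corner of $\Gamma(c)\Gamma(d)$ produces the untwisted cross term $\theta_{\Phi(x),\Phi(y')}$ where $\theta_{\Phi(x),\Phi(y')}\circ R_u$ is required. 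The missing idea is that the whole lower row must be twisted: the paper first defines $\check\Phi$ entrywise by $(\theta,x,\tilde y,a)\mapsto(\Phi_0\theta\Phi_0^{*},\Phi(x),\widetilde{\Phi(y)},j_\Phi(a))$, proves the key identities \eqref{eqt:rel-hat-phi} (in particular $\hat\Phi(\theta)(\Phi(z))=\Phi(\theta(z))u_\Phi$, which your ``routine mixed terms'' remark glosses over) and the single relation $\check\Phi(c)M_{1,u_\Phi}\check\Phi(d)=M_{u_\Phi,u_\Phi}\check\Phi(cd)$, and only then sets $\Gamma=M_{1,u_\Phi}\circ\check\Phi$ and $\Delta=M_{1,w_\Phi}\circ\check\Phi$, so that the lower-left entries are $\widetilde{\Phi(y)u_\Phi}$, resp.\ $\widetilde{\Phi(y)w_\Phi}$, and the $(2,2)$-entries are $au_\Phi$, resp.\ $aw_\Phi$; both halves of \eqref{rel:Del-zero-prod} then drop out, and one still must check (via $Au_\Phi,\,Aw_\Phi\subseteq I_{F_\Phi}$ from Corollary \ref{cor:adj-inj}(a)) that the images lie in $\mathfrak{L}_{F_\Phi}$ rather than only in its multiplier algebra — a point your sketch omits. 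Boundedness of $\Phi$ is not where the difficulty lies: every entry is given by an exact formula, so no density argument is needed.

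A secondary point: your ``if'' direction works only for $\Delta$. Relation \eqref{rel:Del-J} determines $\Gamma$ solely on $J_E(E)$, where all products vanish, so neither of your two suggested pairs is usable: in the first, $\Gamma(J_E(y)^{*})$ is not determined, and in the second, $\Gamma\big(\begin{smallmatrix}0&0\\ \tilde y&0\end{smallmatrix}\big)$ is not determined (and $\theta_{x,y}=0$ is in any case not equivalent to $\langle x,y\rangle=0$). The clean argument is the $\Delta$-one you give: apply the hypothesis to $c=J_E(x)$, $d=J_E(y)$ with $c^{*}d=\big(\begin{smallmatrix}0&0\\ 0&\langle x,y\rangle\end{smallmatrix}\big)$ and use the first equality of \eqref{eqt:cp-Phi-Delta}; the paper is itself terse about the $\Gamma$-version of this implication, so do not expect to extract it from the stated hypotheses by simply ``picking whichever gives the orthogonality relation cleanly''.
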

\begin{proof}
It is clear that \eqref{rel:Del-zero-prod} implies \eqref{eqt:disj-pres}. 
Moreover, if \eqref{rel:Del-J} and \eqref{eqt:disj-pres} hold, then the first equality of \eqref{eqt:cp-Phi-Delta} (as well as a similar one for $\Gamma$) tells us that $\Phi$ is orthogonality preserving. 
It remains to show that if $\Phi$ is orthogonality preserving, then the second statement holds. 
As $I_{F_\Phi}\subseteq I_E$, and the conclusion actually concerns with the adjointable map $\Phi_0: E\rightarrow F_\Phi$ (see Corollary \ref{cor:adj-inj}(a)), we may assume that $E$ is full.
Define $\hat\Phi: \CK(E)\to \CK(F_\Phi)$ by
$\hat\Phi (\theta) := \Phi_0\circ \theta\circ \Phi_0^*$ ($\theta\in \CK(E)$).
Since $\hat \Phi(\theta_{x,y}) = \theta_{\Phi(x),\Phi(y)}$ ($x,y \in E$), we obtain
\begin{equation}\label{eqt:rel-hat-phi}
\hat \Phi (\theta^*) = \hat
\Phi(\theta)^* \quad \mathrm{and} \quad \hat \Phi(\theta)(\Phi(z)) =
\Phi(\theta(z))u_\Phi \qquad (\theta\in \CK(E); z\in E).
\end{equation}

We define $\check \Phi: \mathfrak{L}_E \rightarrow
M(\mathfrak{L}_{F_\Phi})$ by
$\check
\Phi\Big(\begin{array}{cc}
\theta & x\\
\tilde y & a
\end{array}\Big)
:= \Big(\begin{array}{cc}
\hat\Phi(\theta) & \Phi(x)\\
\widetilde{\Phi(y)} & j_\Phi(a)
\end{array}\Big)$
(where $j_\Phi: A \to M(I_{F_\Phi})$ is the canonical map).
Then clearly,
\begin{equation}\label{eqt:check-Phi-sa}
\check\Phi(c^*)\ =\ \check \Phi(c)^* \qquad (c\in \mathfrak{L}_E).
\end{equation}
By Relations \eqref{eqt:rel-hat-phi}, it is not hard to check that 
\begin{equation}\label{eqt:rel-ch-phi}
\check \Phi (c)M_{1,u_\Phi}\check \Phi(d)\ =\ M_{u_\Phi,u_\Phi}\check\Phi(cd)
\qquad (c,d\in \mathfrak{L}_E).
\end{equation}

We may now set $\Gamma(c) :=
M_{1,u_\Phi}\check\Phi(c)$ and $\Delta(c) :=
M_{1,w_\Phi}\check\Phi(c)$ ($c,d\in
\mathfrak{L}_E$).
Observe that $\Gamma(\mathfrak{L}_E) \subseteq
\mathfrak{L}_{F_\Phi}$ because
$Au_\Phi \subseteq I_{F_\Phi}$ (by Corollary \ref{cor:adj-inj}(a)). 
On the other hand, as $w_\Phi\in C^*(u_\Phi) = \overline{C^*(u_\Phi)u_\Phi}$, one has $Aw_\Phi\subseteq \overline{Au_\Phi} =
I_{F_\Phi}$, and $\Delta(\mathfrak{L}_E) \subseteq
\mathfrak{L}_{F_\Phi}$.
It is clear that $\Gamma \circ J_E = J_F\circ \Phi = \Delta \circ J_E$.
Now, the first equality in \eqref{rel:Del-zero-prod}  follows
directly from \eqref{eqt:rel-ch-phi} and the second one follows from both \eqref{eqt:check-Phi-sa} and \eqref{eqt:rel-ch-phi}.
\end{proof}

\medskip

\end{document}